\documentclass{amsart}
\usepackage[alphabetic,nobysame]{amsrefs}
\usepackage{amsmath,amsthm,amssymb,hyperref}
\usepackage[utf8]{inputenc}
\usepackage[english]{babel}

\pagestyle{plain}
\thispagestyle{empty}

\textwidth=16cm \textheight=23cm \oddsidemargin=0cm
\evensidemargin=0cm \topmargin=-20pt

\theoremstyle{plain}
\newtheorem{theorem}{Theorem}[section]
\newtheorem{lemma}[theorem]{Lemma}
\newtheorem{corollary}[theorem]{Corollary}
\newtheorem{proposition}[theorem]{Proposition}

\theoremstyle{definition}
\newtheorem{definition}[theorem]{Definition}
\newtheorem{example}[theorem]{Example}

\title{On Hamiltonian minimality of isotropic non-homogeneous tori in $\mathbb{H}^n$ and $\mathbb{C} \mathrm{P}^{2n+1}$}

\author{Mikhail Ovcharenko}

\thanks{The author was supported by the State Maintenance Program for the Leading Scientific Schools of the Russian Federation (Grant NSh 5913.2018.1) and the Russian Foundation for Basic Research (Grant 18-01-00411)}

\DeclareMathOperator{\rk}{rk}
\DeclareMathOperator{\pr}{Pr}
\DeclareMathOperator{\diver}{div}

\begin{document}

\maketitle

\begin{abstract}
  We construct a family of flat isotropic non-homogeneous tori in $\mathbb{H}^n$ and $\mathbb{C} \mathrm{P}^{2n+1}$ and find necessary and sufficient conditions for their Hamiltonian minimality.
\end{abstract}

\section{Introduction}

A submanifold of real dimension $k$ in a K\"ahler manifold of complex dimension $n \geq k$ is called \emph{isotropic} if the K\"ahler form of the manifold vanishes on it. For $k = n$ the submanifold is called \emph{Lagrangian}. An isotropic submanifold in a K\"ahler manifold is called \emph{Hamiltonian-minimal} (shortly, \emph{H-minimal}) if the variations of its volume along the Hamiltonian fields with compact support are zero. In particular, any minimal isotropic submanifold is H-minimal.

The notion of H-minimality was introducted in the paper \cite{Oh93} in connection with the Arnold conjecture on the number of fixed points of a Hamiltonian symplectomorphism. The examples of H-minimal Lagrangian surfaces in $\mathbb{C}^2$ and $\mathbb{C} \mathrm{P}^2$ were found in \cite{CU98, HR00, HR02} and \cite{Mir03, Ma05}. A family of H-minimal Lagrangian submanifolds in $\mathbb{C}^n$ and $\mathbb{C} \mathrm{P}^n$ arising from intersections of real quadrics was considered in \cite{Mir04, MP13}. In \cite{Sha91} and \cite{Yer18} minimal Lagrangian and isotropic tori in $\mathbb{C} \mathrm{P}^2$ and $\mathbb{C} \mathrm{P}^3$ were studied in connection with the soliton equations: as a smooth periodic solutions of the Tzizeica equation and smooth periodic solutions of the sinh-Gordon equation accordingly. Necessary and sufficient conditions for H-minimality of isotropic homogeneous tori in $\mathbb{C}^n$ and $\mathbb{C} \mathrm{P}^n$ were found in \cite{Ovc18}.

Let us recall that a Riemannian manifold $M$ of real dimension $4n$ endowed with three complex structures $I,J,K$ is called \emph{hyperk\"ahler} if the following holds:
\begin{itemize}
\item $M$ is K\"ahler with respect to these structures;
\item $I,J,K$ satisfy the relation $I \circ J = -J \circ I = K$.
\end{itemize}
Let $(\cdot,\cdot)$ be the Riemannian metric on $M$. We will denote by
\[
  \omega_I = (I(\cdot), \cdot), \quad \omega_J = (J(\cdot), \cdot), \quad
  \omega_K = (K(\cdot), \cdot)
\]
the corresponding K\"ahler forms on $M$.

\begin{definition}
  We will call a submanifold of $N$ in $M$ \emph{$\omega_I$-isotropic} if $\omega_{I|N} = 0$. One can define $\omega_J$- and $\omega_K$-isotropic submanifolds in $M$ in the same way.
\end{definition}

Let us formulate the main results.

Consider a $(l+m)$-dimensional torus in $\mathbb{H}^n$ ($l,m \leq n$) defined by the mapping
\begin{gather*}
  \psi: \mathbb{R}^l \times \mathbb{R}^m \rightarrow \mathbb{H}^n, \quad
  \psi(x,y) = (r_1 e^{j(e_1,x)} e^{k(f_1,y)}, \dots,
  r_n e^{j(e_n,x)} e^{k(f_n,y)}), \\
  x \in \mathbb{R}^l, \quad y \in \mathbb{R}^m, \quad r_d > 0, \quad
  e_d \in \mathbb{R}^l, \quad f_d \in \mathbb{R}^m, \quad d = 1,\dots,n.
\end{gather*}
There $(\cdot, \cdot)$ is the standard dot product, the vectors $e_1,\dots,e_n$ and $f_1,\dots,f_n$ must generate lattices of maximal rank in $\mathbb{R}^l$ and $\mathbb{R}^m$ accordingly. Let us recall that by Euler's identity for quaternions we have
\[
  e^{j(e_d,x)} = \cos(e_d,x) + \sin(e_d,x) j, \quad
  e^{k(f_d,y)} = \cos(f_d,y) + \sin(f_d,y) k.
\]
We will denote this torus by $T^{l,m}$.

Notice that the matrix of the metric tensor of the torus $T^{l,m}$ in the coordinates $(x,y)$ is constant and of block form. Actually, we have $(\psi_{x_p}, \psi_{y_q}) = 0$ for all $p = 1,\dots,l$ and $q = 1,\dots,m$.

The following statement holds (see the proof in Section \ref{sec:Hn}).

\begin{proposition}\label{prop:isotropic}
  The torus $T^{l,m} \subset \mathbb{H}^n$ is $\omega_J$- and $\omega_K$-isotropic, but not $\omega_I$-isotropic.
\end{proposition}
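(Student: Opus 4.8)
The plan is a direct computation with the coordinate frame of $T^{l,m}$, the key point being that the two families of coordinate vector fields are, component by component, left and right quaternionic translates of $\psi$ itself.

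First I would differentiate $\psi$. Writing $\psi_d=r_d e^{j(e_d,x)}e^{k(f_d,y)}$ for the $d$-th quaternionic component and using that $j$ commutes with $e^{j(e_d,x)}$ and $k$ commutes with $e^{k(f_d,y)}$, one gets $(\psi_{x_p})_d=(e_d)_p\,j\,\psi_d$ and $(\psi_{y_q})_d=(f_d)_q\,\psi_d\,k$. In particular the vectors $\psi_{x_1},\dots,\psi_{x_l}$ have componentwise proportional entries, and so do $\psi_{y_1},\dots,\psi_{y_m}$. Since $\omega_I,\omega_J,\omega_K$ split as direct sums of $2$-forms over the $\mathbb{H}$-factors, every one of them vanishes on a pair of vectors with componentwise proportional entries; hence the ``diagonal'' blocks $\omega_\bullet(\psi_{x_p},\psi_{x_{p'}})$ and $\omega_\bullet(\psi_{y_q},\psi_{y_{q'}})$ vanish automatically, and $\omega_\bullet$-isotropy of $T^{l,m}$ reduces to the vanishing on $T^{l,m}$ of the mixed quantities $\omega_\bullet(\psi_{x_p},\psi_{y_q})$.

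Next I would evaluate the mixed terms. With the metric $\langle a,b\rangle=\operatorname{Re}(\bar a b)$ on $\mathbb{H}$,
\[
  \omega_\bullet(\psi_{x_p},\psi_{y_q})=\sum_{d}(e_d)_p(f_d)_q\,\big\langle \bullet(j\psi_d),\,\psi_d k\big\rangle .
\]
For the two structures to be shown isotropic the summand collapses, using $j^2=-1$, $\overline{ab}=\bar b\bar a$ and $\overline{\psi_d}\psi_d=r_d^2$, to the real part of a quaternion that is purely imaginary: in one case one gets outright $-r_d^2\operatorname{Re}(k)=0$, and in the other one gets the real part of an expression of the form $w_1\,\overline{\psi_d}\,w_2\,\psi_d\,w_3$ with $w_1,w_2,w_3\in\{i,j,k\}$, which vanishes because $\overline{\psi_d}w_2\psi_d$ is purely imaginary (conjugation by $\psi_d$ preserves the imaginary part) and multiplication by $w_1,w_3$ keeps it purely imaginary. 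This gives $\omega_J|_{T^{l,m}}=0$ and $\omega_K|_{T^{l,m}}=0$. For the remaining structure the same reduction does not land in the imaginary part, and one is left with
\[
  \omega_I(\psi_{x_p},\psi_{y_q})=\pm\sum_{d}(e_d)_p(f_d)_q\,r_d^2\cos\!\big(2(e_d,x)\big)
\]
(or the analogous expression in $(f_d,y)$), a nonconstant trigonometric polynomial on $T^{l,m}$; splitting the sum by the directions of the $e_d$ and using that $e_1,\dots,e_n$ span $\mathbb{R}^l$ one checks that it is not identically zero, so $T^{l,m}$ is not $\omega_I$-isotropic.

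The only laborious part is the quaternionic bookkeeping in the third step — tracking the signs coming from $ij=k$, $jk=i$, $ki=j$ and from conjugation so as to identify which of the three integrands are purely imaginary; the vanishing of the diagonal blocks is immediate, and the treatment of $\omega_I$ is routine apart from the short non-vanishing argument.
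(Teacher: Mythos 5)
Your first two steps are sound: the formulas $(\psi_{x_p})_d=(e_d)_p\,j\psi_d$ and $(\psi_{y_q})_d=(f_d)_q\,\psi_d k$ are correct, and the vanishing of the diagonal blocks by componentwise proportionality and antisymmetry is fine. The gap is in the third step, in the argument that the \emph{second} of the two forms vanishes on the mixed pairs. The claim that $\operatorname{Re}\bigl(w_1\,\overline{\psi_d}\,w_2\,\psi_d\,w_3\bigr)=0$ because $\overline{\psi_d}w_2\psi_d$ is purely imaginary and ``multiplication by $w_1,w_3$ keeps it purely imaginary'' is false: by cyclicity of the real part, $\operatorname{Re}(w_1qw_3)=\operatorname{Re}\bigl((w_3w_1)q\bigr)$, and unless $w_3w_1=\pm1$ this extracts an \emph{imaginary} coordinate of $q$ rather than its (zero) real part — e.g.\ $i\cdot k\cdot j=1$ is real. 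Carrying the computation out honestly, $\psi_d k\overline{\psi_d}=r_d^2\bigl(k\cos 2(e_d,x)+i\sin 2(e_d,x)\bigr)$, so with the convention $\omega_v(a,b)=\operatorname{Re}(va\bar b)$ forced by the identity $\langle x,y\rangle=(x,y)-\omega_I i-\omega_J j-\omega_K k$ one finds on the $d$-th factor
\[
\omega_I(j\psi_d,\psi_dk)=r_d^2\cos 2(e_d,x),\qquad
\omega_J(j\psi_d,\psi_dk)=0,\qquad
\omega_K(j\psi_d,\psi_dk)=-r_d^2\sin 2(e_d,x).
\]
Exactly one of the three integrands collapses to a zero real part; the other two pick up the two components of the rotated unit $e^{j(e_d,x)}ke^{-j(e_d,x)}$. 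Your own formula for $\omega_I$, with the factor $\cos\bigl(2(e_d,x)\bigr)$, already betrays this: its partner $\sin\bigl(2(e_d,x)\bigr)$ must land somewhere, and it lands in $\omega_K$ (with right multiplications it would land in $\omega_J$ instead). So your argument, made rigorous, establishes isotropy for only one of $\omega_J,\omega_K$, not both.

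Two further points you must reconcile. First, your $\omega_I(\psi_{x_p},\psi_{y_q})=\pm\sum_d(e_d)_p(f_d)_q r_d^2\cos\bigl(2(e_d,x)\bigr)$ disagrees with the paper's computation, which produces the \emph{constant} $\sum_d e_{dp}f_{dq}r_d^2$ after asserting $\langle\xi,\eta\rangle\in\mathbb{C}$; this discrepancy means you and the paper cannot be using the same triple $(I,J,K)$, and before the proposition can be considered proved the complex structures (left versus right quaternion multiplication) must be pinned down explicitly and the three mixed terms recomputed for that choice. Second, your non-vanishing argument for $\omega_I$ is too quick: the spanning of $\mathbb{R}^l$ by $e_1,\dots,e_n$ does not by itself prevent the sum from vanishing identically, since the coefficients $(f_d)_q r_d^2$ can produce cancellations among indices $d$ with equal $e_d$ (for $n=2$, $l=m=1$, $e_1=e_2=1$, $f_1=-f_2=1$, $r_1=r_2$, every mixed term is identically zero). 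A correct proof needs an explicit hypothesis or argument ruling this out.
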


We will denote the blocks of the matrix of the metric tensor of $T^{l,m}$ by
\begin{gather*}
  G_1 = (\psi_{x_p}, \psi_{x_q})_{pq}, \quad
  G_2 = (\psi_{y_r}, \psi_{y_s})_{rs}, \\
  p,q = 1,\dots,l; \quad r,s = 1,\dots,m.
\end{gather*}

Let us also introduce the vectors
\begin{gather*}
  e'_d = (e_d; \Vert e_d \Vert^2_{G_1^{-1}}
  + \Vert f_d \Vert^2_{G_2^{-1}}) \in \mathbb{R}^{l+1}, \quad
  \Vert e_d \Vert_{G_1^{-1}}^2 = (G_1^{-1}e_d, e_d), \\
  f'_d = (f_d; \Vert e_d \Vert^2_{G_1^{-1}}
  + \Vert f_d \Vert^2_{G_2^{-1}}) \in \mathbb{R}^{m+1}, \quad
  \Vert f_d \Vert_{G_2^{-1}}^2 = (G_2^{-1}f_d, f_d).
\end{gather*}

We have

\begin{theorem}\label{thm:Hn}
  The torus $T^{l,m} \subset \mathbb{H}^n$ is H-minimal
  \begin{itemize}
  \item with respect to $\omega_J$ --- if and only if
    $\rk_{\mathbb{Z}} \langle e'_1, \dots, e'_n \rangle = l$;
  \item with respect to $\omega_K$ --- if and only if
    $\rk_{\mathbb{Z}} \langle f'_1, \dots, f'_n \rangle = m$.
  \end{itemize}
\end{theorem}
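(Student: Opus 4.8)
The plan is to reduce H-minimality of $T^{l,m}$ to an explicit condition on the vectors $e_d$, resp.\ $f_d$, and the scalars $c_d:=\Vert e_d\Vert_{G_1^{-1}}^2+\Vert f_d\Vert_{G_2^{-1}}^2$, by computing the mean curvature vector of $T^{l,m}$.

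First I would set up the variational criterion. For an $\omega_J$-isotropic submanifold $N$ the Hamiltonian field determined by $h$ (via $\iota_{X_h}\omega_J=dh$) is $X_h=-J\nabla h$, with $\nabla h$ the ambient gradient, so the first variation of volume along $X_h$ is $-\int_N\langle\vec H,X_h\rangle=-\int_N\langle J\vec H,\nabla h\rangle$. Splitting $\nabla h$ along $N$ into its tangential part $\nabla_N(h|_N)$ and a normal part that can be prescribed arbitrarily (choose $h$ to vanish on $N$ with any prescribed normal derivative), one finds that H-minimality with respect to $\omega_J$ is equivalent to the two conditions $(J\vec H)^{\perp}=0$ and $\diver_N (J\vec H)^{\top}=0$. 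What makes the isotropic (non-Lagrangian) case different from the Lagrangian one is precisely that Hamiltonian normal variations do not fill the whole normal bundle, so the first condition is non-trivial; but once it holds, $J\vec H$ is a tangent translation-invariant vector field on the flat torus $T^{l,m}$ and hence automatically divergence-free. So H-minimality with respect to $\omega_J$ reduces to: \emph{$J\vec H$ is tangent to $T^{l,m}$} (and likewise for $\omega_K$).

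Next I would compute $\vec H$. As the metric of $T^{l,m}$ in the coordinates $(x,y)$ is constant and block-diagonal, the Christoffel symbols vanish and $\vec H=\sum_{p,q}(G_1^{-1})_{pq}\psi_{x_px_q}+\sum_{r,s}(G_2^{-1})_{rs}\psi_{y_ry_s}$. Differentiating $\psi$ twice and using the quaternionic identities, the $d$-th component of $\psi_{x_px_q}$ is $-(e_d)_p(e_d)_q q_d$ and of $\psi_{y_ry_s}$ is $-(f_d)_r(f_d)_s q_d$, so the $d$-th component of $\vec H$ is $-c_d q_d$. Applying $J$, resp.\ $K$, the $d$-th component of $J\vec H$, resp.\ $K\vec H$, becomes $-c_d$ times a vector along the $x$-, resp.\ $y$-, directions in the $d$-th factor $\mathbb{H}$. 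On the other hand, in the $d$-th factor all vectors $\psi_{x_p}$ lie on one line with coefficients $(e_d)_p$, all $\psi_{y_q}$ lie on a second, distinct line with coefficients $(f_d)_q$, and the tangent space of $T^{l,m}$ consists of the vectors whose $d$-th component has coefficients $((Ea)_d,(Fb)_d)$ as $a\in\mathbb{R}^l$, $b\in\mathbb{R}^m$ vary, where $E$, $F$ are the matrices with rows $e_d^{T}$, $f_d^{T}$. Comparing coefficients, $J\vec H$ is tangent to $T^{l,m}$ if and only if there is a fixed $a\in\mathbb{R}^l$ with $c_d=(e_d,a)$ for every $d$, that is, if and only if $c=(c_1,\dots,c_n)$ lies in the column space of $E$; and analogously $K\vec H$ is tangent iff $c$ lies in the column space of $F$.

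Finally I would identify this with the rank condition. The vector $c$ lies in the column space of $E$ exactly when $c$ is orthogonal to every integer relation $u\in\mathbb{Z}^n$ among the $e_d$, i.e.\ $\sum_d u_d c_d=0$ whenever $\sum_d u_d e_d=0$; since $e_1,\dots,e_n$ span $\mathbb{R}^l$, such $u$ generate the full lattice of relations. This says exactly that the lattice generated by $e'_d=(e_d;c_d)\in\mathbb{R}^{l+1}$ has rank $l$ and not $l+1$, i.e.\ $\rk_{\mathbb{Z}}\langle e'_1,\dots,e'_n\rangle=l$; the same reasoning with the $f'_d$ yields the second assertion. The steps I expect to be the main obstacle are the variational criterion in the isotropic non-Lagrangian setting — the normal-freedom argument and the automatic vanishing of the divergence term — and the bookkeeping in the quaternionic computation of $\vec H$ and of the action of $J$ and $K$ on the tangent/normal splitting in each $\mathbb{H}$-factor.
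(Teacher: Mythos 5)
Your proposal is correct and follows essentially the same route as the paper: reduce H-minimality via the Chen--Morvan-type criterion to the tangency of $J\vec H$ (resp.\ $K\vec H$) plus a codifferential condition, compute $\vec H$ from the flat Laplacian, observe that the divergence condition holds automatically once tangency does, and translate tangency of $J\vec H$ into the rank condition on the augmented vectors $e'_d$. The only cosmetic differences are that you re-derive the criterion rather than citing Chen--Morvan and you dispatch the $\delta(i_H\omega)=0$ condition by noting that $J\vec H$ has constant coefficients on the flat torus, where the paper checks $\nabla J H=0$ explicitly.
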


\begin{corollary}
  The torus $T^{n,n} \subset \mathbb{H}^n$ is H-minimal with respect to $\omega_J$ and $\omega_K$ simultaneously.
\end{corollary}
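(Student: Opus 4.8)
The plan is to deduce the Corollary directly from Theorem \ref{thm:Hn} by checking that both rank conditions hold automatically when $l = m = n$. Specializing the theorem to this case, $T^{n,n}$ is H-minimal with respect to $\omega_J$ precisely when $\rk_{\mathbb{Z}} \langle e'_1, \dots, e'_n \rangle = n$, and with respect to $\omega_K$ precisely when $\rk_{\mathbb{Z}} \langle f'_1, \dots, f'_n \rangle = n$. Since the definitions of $e'_d$ and $f'_d$ are symmetric, it suffices to establish the first equality; the second follows verbatim with $f_d, f'_d, G_2$ in place of $e_d, e'_d, G_1$.

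To prove $\rk_{\mathbb{Z}} \langle e'_1, \dots, e'_n \rangle = n$ I would combine two bounds. First, the trivial one: the subgroup $\langle e'_1, \dots, e'_n \rangle \subset \mathbb{R}^{n+1}$ is generated by $n$ elements, so $\rk_{\mathbb{Z}} \langle e'_1, \dots, e'_n \rangle \leq n$. For the matching lower bound, consider the coordinate projection $\pi \colon \mathbb{R}^{n+1} \to \mathbb{R}^n$ forgetting the last coordinate; it is a homomorphism of abelian groups and $\pi(e'_d) = e_d$ by the definition of $e'_d$. Hence $\pi$ maps $\langle e'_1, \dots, e'_n \rangle$ onto $\langle e_1, \dots, e_n \rangle$, and since a surjective homomorphism of abelian groups cannot increase the rank, $\rk_{\mathbb{Z}} \langle e'_1, \dots, e'_n \rangle \geq \rk_{\mathbb{Z}} \langle e_1, \dots, e_n \rangle$.

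The essential input is the standing hypothesis built into the construction of $T^{l,m}$: the vectors $e_1, \dots, e_n$ generate a lattice of maximal rank in $\mathbb{R}^n$, that is $\rk_{\mathbb{Z}} \langle e_1, \dots, e_n \rangle = n$. Together with the two bounds above this forces $\rk_{\mathbb{Z}} \langle e'_1, \dots, e'_n \rangle = n = l$, and the same argument for $f_d, f'_d$ gives $\rk_{\mathbb{Z}} \langle f'_1, \dots, f'_n \rangle = n = m$. Both conditions of Theorem \ref{thm:Hn} then hold, so $T^{n,n}$ is H-minimal with respect to $\omega_J$ and $\omega_K$ simultaneously. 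There is no real obstacle here beyond careful bookkeeping with ranks of finitely generated subgroups of $\mathbb{R}^{n+1}$; the only subtlety worth confirming is that appending the extra coordinate $\Vert e_d \Vert^2_{G_1^{-1}} + \Vert f_d \Vert^2_{G_2^{-1}}$ to each $e_d$ cannot lower the integral rank, which is exactly what the projection $\pi$ makes transparent.
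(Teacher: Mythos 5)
Your proposal is correct and matches the paper's (implicit) reasoning: the corollary is an immediate specialization of Theorem \ref{thm:Hn}, and the paper itself treats the rank condition as holding ``trivially'' when $l=n$ (resp.\ $m=n$). Your projection argument simply makes explicit why $\rk_{\mathbb{Z}} \langle e'_1, \dots, e'_n \rangle$ is squeezed to equal $n$, which is exactly the intended justification.
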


Let us assume that the torus $T^{l,m} \subset \mathbb{H}^{n+1}$ is contained in the unit sphere $S^{4n+3}(1) \subset \mathbb{H}^{n+1}$, i.e., $\sum r_d^2 = 1$. For any $v = I,J,K$ we will consider the Hopf mapping
\[
  \pi_v: S^{4n+3}(1) \rightarrow \mathbb{C} \mathrm{P}^{2n+1}
\]
induced by the left action of the group $\{e^{vt}, t \in\mathbb{R}\}$. Notice that for any $v$ we have the Fubini~--- Study form $\Omega_v$ on $\mathbb{C} \mathrm{P}^{2n+1}$ induced by the form $\omega_v$ on $\mathbb{H}^{n+1}$.

\begin{definition}
  We will call a submanifold $N \subset S^{4n+3}(1)$ \emph{$\pi_I$-horizontal} if $N$ is orthogonal to the fibers of the fibration $\pi_I$. One can define $\pi_J$- and $\pi_K$-horizontal submanifolds in $S^{4n+3}(1)$ in the same way.
\end{definition}

Let us introduce the vectors
\begin{gather*}
  e''_d = (e_d; \Vert e_d \Vert^2_{G_1^{-1}} + \Vert f_d
  \Vert^2_{G_2^{-1}} - (l+m)) \in \mathbb{R}^{l+1}, \\
  f''_d = (f_d; \Vert e_d \Vert^2_{G_1^{-1}} + \Vert f_d
  \Vert^2_{G_2^{-1}} - (l+m)) \in \mathbb{R}^{m+1}.
\end{gather*}

We have

\begin{theorem}\label{thm:HPn}
  Let the torus $T^{l,m} \subset S^{4n+3}(1)$ be $\pi_J$-horizontal. The torus $\pi_J(T^{l,m})$ is H-minimal if and only if
  \[
    \rk_{\mathbb{Z}} \langle e''_1, \dots, e''_n \rangle = l.
  \]
  Similarly, let the torus $T^{l,m}$ be $\pi_K$-horizontal. The torus $\pi_K(T^{l,m})$ is H-minimal if and only if
  \[
    \rk_{\mathbb{Z}} \langle f''_1, \dots, f''_n \rangle = m.
  \]
\end{theorem}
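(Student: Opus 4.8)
The plan is to reduce the H-minimality of the quotient torus in $\mathbb{C}\mathrm{P}^{2n+1}$ to a statement about the preimage torus in the sphere, and then to an explicit computation of a mean-curvature-type vector field on the flat torus, exactly parallel to the proof of Theorem~\ref{thm:Hn}. First I would recall the variational characterization of H-minimality: an isotropic submanifold $N$ (here isotropic with respect to $\Omega_J$, say) is H-minimal if and only if the function $\diver_N(JH)$ vanishes identically, where $H$ is the mean curvature vector and the divergence is taken with respect to the induced flat metric; equivalently, the ``Lagrangian angle'' type $1$-form $\alpha_H = \omega_J(H,\cdot)$ restricted to $N$ is co-closed. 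Because $\pi_J\colon S^{4n+3}(1)\to\mathbb{C}\mathrm{P}^{2n+1}$ is a Riemannian submersion on $\pi_J$-horizontal data and $T^{l,m}$ is $\pi_J$-horizontal by hypothesis, the induced metric on $\pi_J(T^{l,m})$ is isometric to that on $T^{l,m}$ (still the flat metric with blocks $G_1,G_2$), and the mean curvature vector of $\pi_J(T^{l,m})$ in $\mathbb{C}\mathrm{P}^{2n+1}$ is the $\pi_J$-projection of the mean curvature vector of $T^{l,m}$ in $S^{4n+3}(1)$. Hence the co-closedness condition downstairs pulls back to the analogous co-closedness condition for the horizontal lift upstairs.

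The second step is to compute that mean curvature one-form explicitly. For $T^{l,m}\subset S^{4n+3}(1)\subset\mathbb{H}^{n+1}$ one computes $H$ from the second fundamental form of $\psi$ together with the two extra normal directions coming from the sphere and from the Hopf fiber direction $J\psi$. Writing everything in the flat coordinates $(x,y)$, the form $\beta = \omega_J(H,\cdot)$ restricted to $T^{l,m}$ will come out as a constant-coefficient combination of the coordinate differentials $dx_p, dy_r$ whose coefficients are built from $r_d$, the Gram matrices $G_1^{-1},G_2^{-1}$, and the vectors $e_d,f_d$; the bookkeeping is the place where the correction term $-(l+m)$ in the definition of $e''_d, f''_d$ (as opposed to $e'_d,f'_d$ in Theorem~\ref{thm:Hn}) enters, arising from the trace of the shape operator in the sphere/Hopf normal directions. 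I would then argue, as in Section~\ref{sec:Hn}, that this one-form is a logarithmic derivative: it equals $d\log\rho$ for a function $\rho = \sum_d r_d^2 \exp(2\langle \text{linear form in } e''_d\rangle)$, so that $\beta$ is co-closed (equivalently, the torus is H-minimal) precisely when a certain linear system has the right rank. Chasing the exponents, co-closedness holds if and only if the projected integer lattice generated by $e''_1,\dots,e''_n$ in $\mathbb{R}^{l+1}$ has rank exactly $l$ — i.e., the last coordinate is, over $\mathbb{Z}$, a linear combination of the first $l$ — which is the stated condition. The $\pi_K$ case is identical with $J,K$ and $G_1,G_2,e,f$ interchanged in the obvious way.

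The main obstacle I anticipate is the submersion step: one must verify carefully that $\pi_J$-horizontality is exactly the condition that makes the mean curvature vector descend (rather than acquiring an extra tangential-to-fiber component that would spoil the identification), and that the Hopf-fiber normal direction contributes to $H$ upstairs precisely the term that, after projection, produces the $-(l+m)$ shift. Equivalently, one must check that $\diver$ on $\pi_J(T^{l,m})$ with the Fubini–Study-induced flat metric agrees with $\diver$ on $T^{l,m}$ applied to the lifted field $J H$ — this is where a clean statement of ``$\pi_J$ restricted to horizontal $T^{l,m}$ is a local isometry onto its image, intertwining the relevant Kähler data'' needs to be established, presumably already available from the $\pi_J$-horizontality analysis used for Proposition~\ref{prop:isotropic}. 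Once that bridge is in place, the remaining computation is a routine variant of the $\mathbb{H}^n$ case and the rank condition falls out by the same lattice argument.
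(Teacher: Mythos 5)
Your overall reduction --- pass to the horizontal lift in the sphere, compute its mean curvature there, observe that projecting $H$ onto the sphere shifts the coefficients by exactly the $-(l+m)$ appearing in $e''_d$, and then run the same lattice argument as in the $\mathbb{H}^n$ case --- is the same route the paper takes (the submersion step you worry about is exactly the cited Lemma~6 of \cite{Ovc18}, and the shift comes from $\hat H = H - (H,\psi)\psi$ with $(H,\psi)=-1$). However, there is a genuine gap in where you locate the rank condition. You state the H-minimality criterion as ``$\delta\,\omega_J(\hat H,\cdot)=0$'' alone, but the Chen--Morvan criterion (Proposition~\ref{prop:chen}) for \emph{isotropic, non-Lagrangian} submanifolds has two conditions: $J\hat H$ must be \emph{tangent} to the torus, \emph{and} the $1$-form $i_{\hat H}\omega_J$ must be co-closed. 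The tori $T^{l,m}$ are genuinely isotropic (not Lagrangian) for general $l,m$, so the tangency of $J\hat H$ is not automatic, and in the paper it is precisely this tangency condition that produces $\rk_{\mathbb{Z}}\langle e''_1,\dots,e''_n\rangle=l$ (Lemma~\ref{lem:mean-curvature-HPn-tangent}, by solving the overdetermined linear system $(\lambda,e_d)=\alpha_d-(l+m)$ up to scale). The co-closedness, by contrast, holds \emph{unconditionally} once tangency does: in flat orthonormal coordinates the form $\omega_J(\hat H,\cdot)$ has constant coefficients, so $\nabla J\hat H=0$ pointwise and $\delta\,\omega_J(\hat H,\cdot)=0$ with no condition on the lattice (Lemma~\ref{lem:mean-curvature-HPn-coclosed}).

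Consequently your proposed mechanism --- that $\omega_J(\hat H,\cdot)$ is a logarithmic derivative $d\log\rho$ whose co-closedness forces the rank condition --- cannot work as stated: since that form is constant-coefficient on the flat torus, co-closedness imposes nothing, and your argument would conclude that $\pi_J(T^{l,m})$ is always H-minimal, contradicting the theorem. The fix is simply to restore the tangency condition from Proposition~\ref{prop:chen} and derive the rank condition from it exactly as in Lemma~\ref{lem:mean-curvature-tangent} of the proof of Theorem~\ref{thm:Hn}, with $\alpha_d$ replaced by $\alpha_d-(l+m)$; the co-closedness step is then the routine constant-coefficient computation.
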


\begin{corollary}
  Let the torus $T^{n,n} \subset S^{4n+3}(1)$ be $\pi_J$-horizontal. Then the torus $\pi_J(T^{n,n})$ is H-minimal.

  Similarly, let the torus $T^{n,n}$ be $\pi_K$-horizontal. Then the torus $\pi_K(T^{n,n})$ is H-minimal.
\end{corollary}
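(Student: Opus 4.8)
The plan is to deduce this directly from Theorem \ref{thm:HPn}. For the torus $T^{n,n}$ one has $l = m = n$, so by the first part of Theorem \ref{thm:HPn} the torus $\pi_J(T^{n,n})$ is H-minimal precisely when $\rk_{\mathbb{Z}} \langle e''_1, \dots, e''_n \rangle = n$, and by the second part $\pi_K(T^{n,n})$ is H-minimal precisely when $\rk_{\mathbb{Z}} \langle f''_1, \dots, f''_n \rangle = n$. Hence it suffices to verify these two rank equalities.

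First I would observe that, by the definition of $e''_d$, the linear projection $\mathbb{R}^{n+1} \to \mathbb{R}^n$ forgetting the last coordinate sends $e''_d \mapsto e_d$. Therefore the abelian group $\langle e''_1, \dots, e''_n \rangle$ surjects onto $\langle e_1, \dots, e_n \rangle$, which gives the inequality $\rk_{\mathbb{Z}} \langle e''_1, \dots, e''_n \rangle \geq \rk_{\mathbb{Z}} \langle e_1, \dots, e_n \rangle$.

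Next I would use the standing hypothesis on $T^{l,m}$: the vectors $e_1, \dots, e_n$ generate a lattice of maximal rank in $\mathbb{R}^n$, so in particular these $n$ vectors are linearly independent over $\mathbb{R}$, hence over $\mathbb{Q}$, whence $\rk_{\mathbb{Z}} \langle e_1, \dots, e_n \rangle = n$. Combining this with the previous inequality and the trivial bound $\rk_{\mathbb{Z}} \langle e''_1, \dots, e''_n \rangle \leq n$ (an abelian group generated by $n$ elements has rank at most $n$) forces equality, so $\pi_J(T^{n,n})$ is H-minimal. The argument for the vectors $f''_d$ and the torus $\pi_K(T^{n,n})$ is verbatim the same, using that $f_1, \dots, f_n$ generate a lattice of maximal rank in $\mathbb{R}^n$.

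I do not expect any real obstacle: the corollary is a formal consequence of Theorem \ref{thm:HPn} together with the rank-maximality condition already built into the definition of $T^{l,m}$. The only point worth stating carefully is that embedding the generating vectors into the larger ambient space $\mathbb{R}^{n+1}$ cannot decrease the $\mathbb{Z}$-rank, which is exactly what the forgetful-projection surjection records.
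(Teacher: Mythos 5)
Your reduction to Theorem \ref{thm:HPn} is the right strategy, but the rank computation has a genuine gap. The torus $T^{n,n}\subset S^{4n+3}(1)$ lives in $\mathbb{H}^{n+1}$, so the map $\psi$ has $n+1$ components and there are \emph{$n+1$} vectors $e''_1,\dots,e''_{n+1}\in\mathbb{R}^{n+1}$ (the tangency of $J\hat H$ imposes one linear equation per coordinate of $\psi$, so all $n+1$ of them enter the criterion; the upper index $n$ in the displayed condition of Theorem \ref{thm:HPn} is an indexing slip carried over from the $\mathbb{H}^n$ setting). With $n+1$ generators sitting in $\mathbb{R}^{n+1}$, your ``trivial bound'' $\rk_{\mathbb{Z}}\le n$ is not trivial at all --- a priori the rank is $n+1$, and for a non-horizontal choice of data it generically \emph{is} $n+1$ (already for $n=1$: two vectors $(e_d;\alpha_d-2)\in\mathbb{R}^2$ are generically independent). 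A telling symptom is that your argument never uses the $\pi_J$-horizontality hypothesis, which is exactly what closes the gap.

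The missing step is this. Since $e_1,\dots,e_{n+1}$ span $\mathbb{R}^n$, the space of linear relations $\sum_d c_d e_d=0$ is one-dimensional, and $\pi_J$-horizontality says it is spanned by $c_d=r_d^2$. The computation $(H,\psi)=-\tfrac{1}{l+m}\sum_d \alpha_d r_d^2=-1$ from the lemma on $\hat H$ gives $\sum_d r_d^2\,\alpha_d=l+m=2n$, hence
\[
  \sum_{d=1}^{n+1} r_d^2\, e''_d=\Bigl(\sum_d r_d^2 e_d;\ \sum_d r_d^2\alpha_d-2n\sum_d r_d^2\Bigr)=(0;0),
\]
so the unique relation among the $e_d$ lifts to the $e''_d$, every relation lifts, and therefore $\rk\langle e''_1,\dots,e''_{n+1}\rangle=\rk\langle e_1,\dots,e_{n+1}\rangle=n$ as required (your projection argument correctly supplies the lower bound, and the passage from $\rk_{\mathbb{R}}$ to $\rk_{\mathbb{Z}}$ is as in Lemma \ref{lem:mean-curvature-tangent}). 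Note that your counting argument \emph{is} the correct proof of the earlier corollary for $T^{n,n}\subset\mathbb{H}^n$, where there really are only $n$ vectors $e'_d$; it just does not transfer to the projective case, where the extra coordinate of $\psi$ forced by $\sum r_d^2=1$ produces one more generator than the target rank.
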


Let us notice that the tori $\pi_J(T^{1,1})$ and $\pi_K(T^{1,1})$ are \emph{minimal} submanifolds in $\mathbb{C} \mathrm{P}^3$ (see \cite{Yer19}).

The proof of Theorem \ref{thm:Hn} and \ref{thm:HPn} is given in Section \ref{sec:Hn} and \ref{sec:HPn} accordingly. There we also give explicit examples of tori under consideration.

The author is grateful to A.~Mironov for stating the problem and useful discussions.

\section{Proof of Theorem \ref{thm:Hn}}
\label{sec:Hn}

We will denote by
\[
  \langle x,y \rangle = \sum x_i \bar{y}_i,
  \quad x,y \in \mathbb{H}^n,
\]
the standard Hermitian inner product in $\mathbb{H}^n$. By definition we have
\[
  \langle x,y \rangle = (x,y) - \omega_I(x,y) i
  - \omega_J(x,y) j - \omega_K(x,y) k.
\]

\begin{proof}[Proof of Proposition \ref{prop:isotropic}]
  One can easily check that for any given tangent vectors $\xi$, $\eta$ on $T^{l,m}$ we have $\langle \xi, \eta \rangle \in \mathbb{C}$, i.e., $\omega_J(\xi, \eta) = \omega_K(\xi,\eta) = 0$.

  Nonetheless, the torus $T^{l,m}$ is not $\omega_I$-isotropic. Actually,
  \[
    \omega_I(\psi_{x_p}, \psi_{y_q})
    = e_{1p}f_{1q}r_1^2 + \dots + e_{np}f_{nq}r_n^2.
  \]
  The matrix of the metric tensor of $T^{l,m}$ in the coordinates $(x,y)$ is constant and of block form, so after proper linear change of coordinates on $\mathbb{R}^l$ and $\mathbb{R}^m$ we can choose $e_1$ and $f_1$ to be the first standard basis vector in $\mathbb{R}^l$ and $\mathbb{R}^m$ accordingly. Then we have $\omega_I(\psi_{x_1}, \psi_{y_1}) = r_1^2 \neq 0$.
\end{proof}

To prove Theorem \ref{thm:Hn} we are going to use the following Chen and Morvan criterion of H-minimality for compact isotropic submanifolds.

\begin{proposition}[see \cite{CM94}*{Proposition~3.5}]\label{prop:chen}
  Let $N$ be a compact isotropic submanifold of a K\"ahler manifold $(M,I)$ with the K\"ahler form $\omega$, $H$ be the mean curvature field of $N$. The submanifold $N$ is H-minimal if and only if $IH$ is a tangent vector field to $N$ and $\delta (i_H \omega) = 0$, where $i_H \omega = \omega (H,\cdot)$ is a 1-form on $N$, $\delta$ is the codifferential. \qed
\end{proposition}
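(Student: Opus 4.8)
The plan is to derive both conditions from the first variation of the volume functional restricted to Hamiltonian deformations. Since the flow of a Hamiltonian field $X_f$ (with $i_{X_f}\omega = df$, equivalently $X_f = -I\operatorname{grad} f$, where $f$ is compactly supported and $\operatorname{grad}$ denotes the gradient on $M$) consists of symplectomorphisms, it preserves the vanishing of $\omega|_N$, so every such deformation keeps $N$ isotropic. For a compact $N$ the first variation formula reads $\frac{d}{dt}\big|_{t=0}\operatorname{Vol}(N_t) = -\int_N (H, X_f)\, d\mathrm{vol}$, because only the normal component of the variation field contributes and $H$ is normal. Thus $N$ is H-minimal if and only if $\int_N (H, X_f)\,d\mathrm{vol} = 0$ for every compactly supported $f$. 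Using the K\"ahler identity $(H, IW) = -(IH, W)$ I would rewrite $(H, X_f) = -(H, I\operatorname{grad} f) = (IH, \operatorname{grad} f)$, reducing the criterion to
\[
  \int_N (IH, \operatorname{grad} f)\, d\mathrm{vol} = 0 \quad \text{for all compactly supported $f$.}
\]

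Next I would decompose everything into tangential and normal parts along $N$. Writing $\operatorname{grad} f = \operatorname{grad}_N f + (\operatorname{grad} f)^\perp$, where $\operatorname{grad}_N f$ is the gradient of $f|_N$ on $N$, and $IH = (IH)^\top + (IH)^\perp$, the integrand splits as $(IH, \operatorname{grad} f) = ((IH)^\top, \operatorname{grad}_N f) + ((IH)^\perp, (\operatorname{grad} f)^\perp)$. The decisive observation is that the restriction $f|_N$ and the normal first derivatives of $f$ along $N$ constitute independent $1$-jet data: given any $u \in C^\infty(N)$ and any smooth normal field $W$ along $N$, there is a compactly supported $f$ on $M$ with $f|_N = u$ and $(\operatorname{grad} f)^\perp = W$. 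Hence $\operatorname{grad}_N f$ and $(\operatorname{grad} f)^\perp$ may be prescribed separately.

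The criterion then separates into the two stated conditions. Choosing $f$ with $f|_N \equiv 0$ (so $\operatorname{grad}_N f = 0$) and $(\operatorname{grad} f)^\perp = W$ arbitrary, vanishing of the integral forces $\int_N ((IH)^\perp, W)\, d\mathrm{vol} = 0$ for every $W$, whence $(IH)^\perp = 0$; this is precisely the condition that $IH$ be tangent to $N$. Granting this, $(IH)^\top = IH$ and the criterion becomes $\int_N (IH, \operatorname{grad}_N f)\, d\mathrm{vol} = 0$ for all $f$; integrating by parts on the closed manifold $N$ gives $\int_N (IH, \operatorname{grad}_N f) = -\int_N (\diver_N IH)\, f$, so this holds for all $f|_N \in C^\infty(N)$ if and only if $\diver_N(IH) = 0$. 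Finally I would identify this with the codifferential condition: for tangent $Y$ one has $(i_H\omega)(Y) = \omega(H, Y) = (IH, Y)$, so when $IH$ is tangent $i_H\omega$ is the $1$-form dual to $IH$ and $\delta(i_H\omega) = -\diver_N(IH)$; thus $\diver_N(IH) = 0 \iff \delta(i_H\omega) = 0$. The converse direction is immediate: if $IH$ is tangent and $\delta(i_H\omega) = 0$, the normal term vanishes pointwise and integration by parts kills the tangential term.

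The main obstacle I anticipate is justifying the independence of the tangential value $f|_N$ and the normal gradient $(\operatorname{grad} f)^\perp$ rigorously (via a tubular neighbourhood and a normal-coordinate construction of $f$), since this is exactly what permits testing the two conditions separately; one must also pin down the precise sign conventions relating $X_f$, $i_H\omega$, $\delta$ and $\diver_N$ so that both equivalences come out correctly. The remaining ingredients---the first variation formula and the integration by parts on the closed manifold $N$---are standard.
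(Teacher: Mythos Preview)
The paper does not prove this proposition at all: it is quoted from \cite{CM94}*{Proposition~3.5} and marked with \qedsymbol, so there is no argument in the paper to compare yours against. Your outline is the standard first-variation derivation one would expect (and is essentially how the result is obtained in the cited reference): compute $\frac{d}{dt}\big|_{0}\mathrm{Vol}(N_t)=-\int_N(H,X_f)$, rewrite $(H,X_f)=(IH,\operatorname{grad} f)$, split into tangential and normal parts, and use the independence of the $1$-jet data along $N$ to separate the two conditions.

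Your argument is sound. The only point I would tighten is the one you yourself flag: the construction of a compactly supported $f$ with prescribed $f|_N$ and prescribed $(\operatorname{grad} f)^\perp$. This is routine via a tubular neighbourhood---take $f(p,v)=u(p)+(W(p),v)$ in normal-bundle coordinates and cut off---but it is the hinge of the whole proof, so state it explicitly rather than leaving it as an ``anticipated obstacle''. The sign bookkeeping ($X_f=-I\operatorname{grad} f$, $(i_H\omega)^\sharp=IH$, $\delta\alpha=-\diver\alpha^\sharp$) that you worry about is internally consistent as you have written it.
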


Consequently, a compact $\omega_J$-isotropic submanifold $N$ of a hyperk\"ahler manifold $M$ is H-minimal with respect to $\omega_J$ if and only if $JH$ is a tangent vector field to $N$ and $\delta \omega_J(H,\cdot) = 0$ (the statement for $\omega_K$ is analogous).

\begin{lemma}\label{lem:mean-curvature-Hn}
  The mean curvature field of the torus $T^{l,m}$ in $\mathbb{H}^n$ has the following form:
  \[
    H = -\frac{1}{l+m}
    (\alpha_1 r_1 e^{j(e_1,x)} e^{k(f_1,y)}, \dots,
    \alpha_n r_n e^{j(e_n,x)} e^{k(f_n,y)}).
  \]
  There we put $\alpha_d = \Vert e_d \Vert^2_{G_1^{-1}} +
  \Vert f_d \Vert^2_{G_2^{-1}} = (G_1^{-1}e_d,e_d) + (G_2^{-1}f_d,f_d)$.
\end{lemma}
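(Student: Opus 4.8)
The plan is to compute the mean curvature field of $T^{l,m}$ directly from the definition, exploiting the flatness of the torus (constant metric in the coordinates $(x,y)$) and the fact that $\mathbb{H}^n$ is flat. Since the metric tensor is constant in the coordinates $(x,y)$, all Christoffel symbols of $T^{l,m}$ in these coordinates vanish, so the second fundamental form is simply the normal component of the ambient second derivatives $\psi_{x_p x_q}$, $\psi_{x_p y_q}$, $\psi_{y_p y_q}$. Differentiating $\psi$ twice is straightforward: for instance $\psi_{x_p x_q}$ has $d$-th component $-e_{dp} e_{dq} r_d e^{j(e_d,x)} e^{k(f_d,y)}$, and similarly for the other blocks (the mixed derivative $\psi_{x_p y_q}$ will involve the noncommuting factors $j$ and $k$).

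Next I would compute the mean curvature as $H = \operatorname{tr}_g(\text{second fundamental form})$, i.e.
\[
  H = \sum_{p,q} (G_1^{-1})_{pq}\, (\psi_{x_p x_q})^\perp
    + \sum_{p,q} (G_2^{-1})_{pq}\, (\psi_{y_p y_q})^\perp
    + 2 \sum_{p,q} (\text{mixed terms})^\perp,
\]
where $\perp$ denotes projection onto the normal bundle of $T^{l,m}$ in $\mathbb{H}^n$. The key observation is that the full (ambient) trace, before projecting, already has the claimed form: the $d$-th component of $\sum_{p,q}(G_1^{-1})_{pq}\psi_{x_p x_q}$ is $-(G_1^{-1}e_d,e_d)\, r_d e^{j(e_d,x)}e^{k(f_d,y)}$, the analogous $G_2$-sum contributes $-(G_2^{-1}f_d,f_d)\,r_d e^{j(e_d,x)}e^{k(f_d,y)}$, and the mixed terms should vanish after tracing (the coefficient $\sum_{p,q}(\text{mixed})$ symmetrizes against an antisymmetric-in-$j,k$ expression, or simply is multiplied by the off-diagonal metric block which is zero by the remark $(\psi_{x_p},\psi_{y_q})=0$). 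This yields exactly the vector $-\tfrac{1}{l+m}(\alpha_1 r_1 e^{j(e_1,x)}e^{k(f_1,y)},\dots)$ up to the projection.

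Finally I would check that this candidate vector is already normal to $T^{l,m}$, so that the projection $\perp$ acts as the identity on it: one computes $\langle H, \psi_{x_p}\rangle$ and $\langle H, \psi_{y_q}\rangle$ using the Hermitian inner product and verifies these are purely imaginary (in the appropriate sense) — in fact the relevant real parts $(H,\psi_{x_p})$ and $(H,\psi_{y_q})$ vanish because $\psi_{x_p}$ has $d$-th component $e_{dp} r_d e^{j(e_d,x)} j\, e^{k(f_d,y)}$ and pairing with $r_d e^{j(e_d,x)}e^{k(f_d,y)}$ gives a quaternion with zero real part, summed over $d$. (Alternatively one notes $H$ is proportional to the position vector modified by the coefficients $\alpha_d$, and the torus lies on the product of circles of radii $r_d$, so the radial-type direction is normal.) The main obstacle is the bookkeeping for the mixed second derivatives $\psi_{x_p y_q}$: because $j$ and $k$ anticommute, one must be careful that the noncommutativity does not produce an extra surviving term; the cleanest route is to observe that the coefficient multiplying any mixed contribution is an entry of the vanishing off-diagonal metric block, so it drops out regardless. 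Once the normalization constant $-\tfrac{1}{l+m}$ is pinned down by dividing the ambient trace by $\dim T^{l,m} = l+m$, the formula follows.
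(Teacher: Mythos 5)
Your proposal is correct and is essentially the paper's argument: the paper simply quotes the identity $H = \frac{1}{l+m}\Delta\psi$ (Lawson) and notes that, the metric being constant and block-diagonal, $\Delta = \sum G_1^{pq}\partial_{x_p}\partial_{x_q} + \sum G_2^{rs}\partial_{y_r}\partial_{y_s}$, which is exactly your trace of second derivatives against $G_1^{-1}$ and $G_2^{-1}$. The extra steps you carry out by hand --- the vanishing of the mixed terms (via the zero off-diagonal block of the inverse metric) and the normality of the resulting vector --- are precisely the content of that quoted identity, so the two proofs coincide.
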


\begin{proof}
  The mean curvature field of $T^{l,m}$ equals (see, for example, \cite[Proposition~8]{Law80})
  \[
    H = \frac{1}{l+m} \Delta \psi,
  \]
  where $\Delta$ is the Laplace~--- Beltrami operator of the torus $T^{l,m}$ in the coordinates $(x,y)$. Since the matrices $G_1$ and $G_2$ are constant, we have
  \[
    \Delta = \sum_{p,q=1}^l G_1^{pq} \partial_{x_p}\partial_{x_q} +
    \sum_{r,s=1}^m G_2^{rs} \partial_{y_r}\partial_{y_s},
  \]
  where $G_1^{pq}$, $G_2^{rs}$ are the coefficients of the matrices $G_1^{-1}$ and $G_2^{-1}$ accordingly.

  Consequently, we obtain
  \begin{gather*}
    \sum_{p,q=1}^l G_1^{pq} \partial_{x_p} \partial_{x_q} e^{j(e_d,x)}
    = -(G_1^{-1} e_d,e_d) e^{j(e_d,x)}, \\
    \sum_{r,s=1}^m G_2^{rs} \partial_{y_r} \partial_{y_s} e^{k(f_d,y)}
    = -(G_2^{-1} f_d,f_d) e^{k(f_d,y)}
  \end{gather*}
  for all $d = 1,\dots,n$.
\end{proof}

\begin{lemma}\label{lem:mean-curvature-tangent}
  Let $H$ be the mean curvature field of the torus $T^{l,m}$. The vector field $JH$ is tangent to $T^{l,m}$ if and only if
  \[
    \rk_{\mathbb{Z}} \langle e'_1, \dots, e'_n \rangle = l.
  \]
\end{lemma}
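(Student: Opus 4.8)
The plan is to compute $JH$ explicitly using the formula for $H$ from Lemma~\ref{lem:mean-curvature-Hn}, decompose the tangent space of $T^{l,m}$ at a point, and express the tangency condition as a linear-algebraic condition on the vectors $e'_d$ that then translates into the stated rank condition over $\mathbb{Z}$. First, I would write down a basis of the tangent space: $\psi_{x_p} = (\dots, e_{dp}\, r_d\, e^{j(e_d,x)} j\, e^{k(f_d,y)}, \dots)_d$ for $p=1,\dots,l$ and $\psi_{y_q} = (\dots, f_{dq}\, r_d\, e^{j(e_d,x)} e^{k(f_d,y)} k, \dots)_d$ for $q = 1,\dots,m$, together with the position vector $\psi$ itself, which spans the radial direction. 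Applying $J$ (left multiplication by $j$, say, depending on the convention that makes $T^{l,m}$ be $\omega_J$-isotropic) to $H$ from Lemma~\ref{lem:mean-curvature-Hn} gives $JH$ with components proportional to $\alpha_d\, r_d\, j\, e^{j(e_d,x)} e^{k(f_d,y)}$; since $j\, e^{j(e_d,x)} = e^{-j(e_d,x)} j$, this is $\alpha_d\, r_d\, e^{-j(e_d,x)} j\, e^{k(f_d,y)}$, and I would compare this against the span of $\psi$, the $\psi_{x_p}$, and the $\psi_{y_q}$.

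The key computational step is to recognize that $JH$ lies in the real span of $\{\psi, \psi_{x_1}, \dots, \psi_{x_l}\}$ — the $\psi_{y_q}$ cannot contribute because they carry a factor $k$ on the right rather than $j$, while $JH$ carries a $j$; a short computation with the quaternionic basis confirms the $y$-directions are orthogonal to $JH$. So $JH$ is tangent if and only if there exist real constants $c_0, c_1, \dots, c_l$ such that, componentwise for each $d$,
\[
  -\frac{\alpha_d}{l+m}\, e^{-j(e_d,x)} j
  = c_0\, e^{j(e_d,x)}
  + \sum_{p=1}^{l} c_p\, e_{dp}\, e^{j(e_d,x)} j.
\]
Writing $e^{\pm j(e_d,x)} = \cos(e_d,x) \pm \sin(e_d,x) j$ and matching the coefficients of $1$ and $j$, this becomes a pair of trigonometric identities in $(e_d,x)$ that must hold for all $x$; equivalently $c_0 = 0$ and $\sum_{p=1}^l c_p e_{dp} = -\alpha_d/(l+m)$, i.e. the augmented vector $(-c_1,\dots,-c_l; 1/(l+m))$ must be orthogonal to every $e'_d = (e_d; \alpha_d)$. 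I would phrase this as: $JH$ is tangent iff the system $(e'_d, w) = 0$, $d = 1,\dots,n$, admits a solution $w \in \mathbb{R}^{l+1}$ with nonzero last coordinate.

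The main obstacle — and the point where care is needed — is the passage from "there is a real solution $w$ with $w_{l+1} \neq 0$" to the integral rank condition $\rk_{\mathbb{Z}}\langle e'_1,\dots,e'_n\rangle = l$. The subtlety is that the $e_d$ span $\mathbb{R}^l$ over $\mathbb{R}$ (indeed generate a full-rank lattice), so $\rk_{\mathbb{Z}}\langle e'_1,\dots,e'_n\rangle$ is always at least $l$ and at most $l+1$; the condition "$= l$" says precisely that the $e'_d$ satisfy a nontrivial $\mathbb{Z}$-linear (equivalently $\mathbb{Q}$-linear, equivalently $\mathbb{R}$-linear) relation, which forces the orthogonal complement of $\langle e'_d\rangle_{\mathbb{R}}$ to be one-dimensional and — since the projections $e_d$ already span $\mathbb{R}^l$ — to be spanned by a vector with nonzero last coordinate. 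Conversely if the rank is $l+1$ the $e'_d$ span $\mathbb{R}^{l+1}$, no nonzero $w$ is orthogonal to all of them, and $JH$ fails to be tangent. I would also need to check that $\alpha_d/(l+m) \neq 0$ is automatic (it is, since $G_1^{-1}, G_2^{-1}$ are positive definite and $e_d, f_d$ are not both zero), so that $JH$ is genuinely nonzero and the tangency question is nonvacuous. The argument for $\omega_K$ is identical after swapping the roles of $x,y$ and $j,k$.
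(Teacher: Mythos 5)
Your overall strategy coincides with the paper's: compute $JH$ from Lemma \ref{lem:mean-curvature-Hn}, observe that the $\psi_{y_q}$-directions cannot contribute, reduce tangency to the solvability of the overdetermined linear system $\sum_p c_p e_{dp} = -\alpha_d/(l+m)$, and convert solvability (Rouch\'e--Capelli, i.e.\ equality of the ranks of the coefficient and augmented matrices) into the integral rank condition via the lattice hypothesis on the $e_d$. The reduction and the endgame are the same as in the paper, and your dual formulation via a vector $w \in \mathbb{R}^{l+1}$ orthogonal to all $e'_d$ with nonzero last coordinate is a legitimate equivalent of it.

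However, two of your intermediate steps are wrong as written. First, the position vector $\psi$ is \emph{not} part of a basis of the tangent space of $T^{l,m}$: the torus is $(l+m)$-dimensional and its tangent space is spanned by the $\psi_{x_p}$ and $\psi_{y_q}$ alone (indeed $(\psi,\psi_{x_p}) = (\psi,\psi_{y_q}) = 0$, so $\psi$ is normal). Testing membership in the enlarged span is a priori a weaker condition; you are saved only because your computation forces $c_0=0$, but the setup should be corrected. Second, the identity $j\,e^{j(e_d,x)} = e^{-j(e_d,x)}j$ is false: $e^{j\theta}$ lies in the commutative subalgebra $\mathbb{R}\oplus\mathbb{R}j$, so $j$ \emph{commutes} with it, $j\,e^{j\theta} = e^{j\theta}j$ (the anticommutation rule you used holds for $i$ or $k$ moving past $e^{j\theta}$, not for $j$). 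If one takes your displayed equation literally, with $e^{-j(e_d,x)}j$ on the left, matching coefficients of $1$ and $j$ for all $x$ forces $\alpha_d = 0$, i.e.\ the system would have no solution at all; the conclusion you then state ($c_0 = 0$ and $\sum_p c_p e_{dp} = -\alpha_d/(l+m)$) is the one that follows from the \emph{correct} commutation, after cancelling the common factor $e^{j(e_d,x)}$. Finally, your claim that $\rk_{\mathbb{Z}}\langle e'_1,\dots,e'_n\rangle$ is always at most $l+1$ is not true for arbitrary real last coordinates (the group generated need not be discrete); the passage from $\rk_{\mathbb{R}}=l$ to $\rk_{\mathbb{Z}}=l$ genuinely uses that the $e_d$ are integer combinations of a lattice basis, so that the real relation space among the $e_d$ is defined over $\mathbb{Q}$. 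These are all repairable, and the repaired argument is the paper's.
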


\begin{proof}
  The vector field $JH$ is tangent to $T^{l,m}$ if and only if there exist real functions $\lambda_1(x,y), \dots, \lambda_l(x,y)$; $\eta_1(x,y), \dots, \eta_m(x,y)$ such that
  \begin{align*}
    H = -\frac{1}{l+m}
    (\alpha_1 r_1 e^{j(e_1,x)} e^{k(f_1,y)}, \dots,
    \alpha_n r_n e^{j(e_n,x)} e^{k(f_n,y)}) \\
    = j(\lambda_1 \partial_{x_1} \psi + \dots
    + \lambda_l \partial_{x_l} \psi + \eta_1 \partial_{y_1} \psi
    + \dots + \eta_m \partial_{y_m} \psi).
  \end{align*}
  This is equivalent to the identities
  \[
    \frac{\alpha_d}{l+m} = (\lambda,e_d) - i(\eta,f_d),
    \quad d = 1,\dots,n,
  \]
  where we put $\lambda = (\lambda_1, \dots, \lambda_l)$, $\eta = (\eta_1, \dots, \eta_m)$. It is clear that $\eta$ should be zero. We have
  \[
    \frac{\alpha_d}{l+m} = (\lambda,e_d), \quad d = 1,\dots,n.
  \]
  This overdetermined system of linear equations has a solution $\lambda$ (where $\lambda$ does not depend on $x$ and $y$) if and only if
  \[
    \rk_{\mathbb{R}} \langle e'_1, \dots, e'_n \rangle
    = \rk_{\mathbb{R}} \langle e_1, \dots, e_n \rangle = l.
  \]
  Recall that the vectors $e_1,\dots,e_n$ define a lattice of rank $l$ in $\mathbb{R}^l$, hence
  \[
    \rk_{\mathbb{R}} \langle e_1, \dots, e_n \rangle
    = \rk_{\mathbb{Z}} \langle e_1, \dots, e_n \rangle.
  \]
  On the other hand, we have $e_d' = (e_d; \Vert e_d \Vert^2_{G_1^{-1}}
  + \Vert f_d \Vert^2_{G_2^{-1}})$, so
  \[
    \rk_{\mathbb{R}} \langle e'_1, \dots, e'_n \rangle =
    \rk_{\mathbb{Z}} \langle e'_1, \dots,  e'_n \rangle = l.
  \]
\end{proof}

\begin{lemma}\label{lem:mean-curvature-coclosed}
  Suppose that the vector field $JH$ is tangent to the torus $T^{l,m}$. Then $\delta \omega_J(H,\cdot) = 0$.
\end{lemma}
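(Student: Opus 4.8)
The plan is to reduce the vanishing of $\delta\,\omega_J(H,\cdot)$ to the fact that a parallel vector field on the flat torus $T^{l,m}$ is divergence-free. First I would observe that, since $JH$ is assumed tangent to $T^{l,m}$, the $1$-form $i_H\omega_J = \omega_J(H,\cdot)$ on $T^{l,m}$ is exactly the metric dual $(JH)^\flat$: for any tangent vector $\xi$ one has $\omega_J(H,\xi) = (JH,\xi)$, and because $JH$ is tangent this is the pairing of the tangent vector $JH$ with $\xi$ in the induced metric. Recalling that the codifferential of a $1$-form $\beta$ is $\delta\beta = -\diver(\beta^\sharp)$, the identity to be proved becomes
\[
  \delta\,\omega_J(H,\cdot) = -\diver_{T^{l,m}}(JH),
\]
so it suffices to show $\diver_{T^{l,m}}(JH) = 0$.

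Next I would feed in the explicit shape of $JH$ obtained inside the proof of Lemma~\ref{lem:mean-curvature-tangent}: under the tangency hypothesis the coefficients $\eta$ vanish and $\lambda$ is a \emph{constant} vector, so that $JH$ equals, up to the sign coming from the convention for $J$, the vector field $\lambda_1\,\partial_{x_1}\psi + \dots + \lambda_l\,\partial_{x_l}\psi$ with $\lambda_1,\dots,\lambda_l\in\mathbb{R}$. In the coordinates $(x,y)$ the induced metric of $T^{l,m}$ has the constant block-diagonal matrix $\operatorname{diag}(G_1,G_2)$; hence the torus is flat, the coordinate vector fields $\partial_{x_p}\psi$, $\partial_{y_r}\psi$ are parallel, and any constant-coefficient combination of them — in particular $JH$ — is a parallel vector field. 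A parallel vector field is automatically divergence-free; equivalently, since $\det\operatorname{diag}(G_1,G_2)$ is constant, $\diver_{T^{l,m}} V = \sum_{p}\partial_{x_p}V^{x_p} + \sum_{r}\partial_{y_r}V^{y_r}$, which vanishes when all components $V^{x_p},V^{y_r}$ are constant. This yields $\delta\,\omega_J(H,\cdot)=0$, and the argument for $\omega_K$ is word-for-word the same.

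There is essentially no hard step: the entire content was already extracted in Lemma~\ref{lem:mean-curvature-tangent}, which pins down the rigid (constant-coefficient) form of $JH$. The only point requiring a little care is the first identification — that the $1$-form $i_H\omega_J$ on $T^{l,m}$ really is $(JH)^\flat$ for the \emph{induced} metric. This uses both that $H$ is the (normal) mean curvature vector of $T^{l,m}$ in the ambient $\mathbb{H}^n$ and, crucially, the standing hypothesis that $JH$ is tangent to $T^{l,m}$; without the latter one would only obtain the flat of the tangential part of $JH$, and co-closedness could genuinely fail.
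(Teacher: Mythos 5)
Your proof is correct, and it takes a genuinely cleaner route than the paper while sharing the same skeleton: both arguments reduce $\delta\,\omega_J(H,\cdot)=0$ to showing that the tangent field $JH$ is parallel, hence divergence-free, on the flat torus. The paper establishes parallelism by a direct ambient computation: after passing to orthonormal coordinates $(z,w)$ it evaluates the quaternionic Hermitian products $\langle \partial_{z_p} JH, \partial_{z_r}\psi\rangle$ and $\langle \partial_{z_p} JH, \partial_{w_s}\psi\rangle$, observes that they are purely imaginary (multiples of $j$ and $k$ respectively), and concludes $\nabla_{\psi_{z_p}}JH=\pr(\partial_{z_p}JH)=0$. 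You instead recycle the output of the proof of Lemma~\ref{lem:mean-curvature-tangent}: under the tangency hypothesis $\eta=0$ and $\lambda$ is a \emph{constant} vector, so $JH=\mp\sum_p\lambda_p\,\partial_{x_p}\psi$ is a constant-coefficient combination of coordinate fields in a chart where the metric matrix $\operatorname{diag}(G_1,G_2)$ is constant; such a field is parallel, and $\diver V=\sum_\mu\partial_\mu V^\mu=0$. What your version buys is transparency about where the hypothesis enters and the avoidance of any quaternionic computation; what it costs is a dependence on the internal details of the previous lemma's proof (the explicit solution $\lambda$, $\eta=0$) rather than only on its statement, which the paper's self-contained computation does not need. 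Your preliminary identification $i_H\omega_J=(JH)^\flat$ on the torus, and the sign convention $\delta\beta=-\diver\beta^\sharp$ (the paper uses the opposite sign, which is immaterial since the quantity vanishes), are both fine.
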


\begin{proof}
  Notice that there is linear change of coordinates $z = Ax$ and $w = By$ on $\mathbb{R}^l$ and $\mathbb{R}^m$ accordingly such that
  \[
    (\partial_{z_p} \psi, \partial_{z_q}\psi) = \delta_{pq}, \quad
    (\partial_{w_r} \psi, \partial_{w_s}\psi) = \delta_{rs},
  \]
  which follows from the Gram~--- Schmidt process. For simplifying the proof, it is convinient to pass to the coordinates $(z,w)$.

  Recall that $\omega_J(H,\cdot) = (JH,\cdot)$. We have (see, for example, \cite{Bes08})
  \[
    \delta \omega_J(H,\cdot) = \diver JH =
    \sum_{p=1}^l (\nabla_{\psi_{z_p}} JH, \psi_{z_p}) +
    \sum_{q=1}^m (\nabla_{\psi_{w_q}} JH, \psi_{w_q}),
  \]
  where $\nabla$ is a connection on the torus $T^{l,m}$ compatible with the induced metric. Let us show that $\nabla_{\psi_{z_p}} JH = \nabla_{\psi_{w_q}} JH = 0$ for any $p = 1,\dots,l$ and $q = 1,\dots,m$.

  Actually,
  \begin{align*}
    \nabla_{\psi_{z_p}} JH = \pr(\partial_{z_p} JH) \\
    = -\frac{1}{l+m} \pr
    (\partial_{z_p} j (\alpha_1 r_1 e^{j(e_1,z)} e^{k(f_1,w)},
    \dots, \alpha_n r_n e^{j(e_n,z)} e^{k(f_n,w)})) \\
    = \frac{1}{l+m} \pr
    (e_{1p} \alpha_1 r_1 e^{j(e_1,z)} e^{k(f_1,w)}, \dots,
    e_{np} \alpha_n r_n e^{j(e_n,z)} e^{k(f_n,w)}),
  \end{align*}
  where $\pr$ is the orthogonal projection of the vector field to the torus. Furthermore,
  \begin{gather*}
    \partial_{z_r} \psi = j (e_{1r}r_1 e^{j(e_1,x)} e^{k(f_1,y)},
    \dots, e_{nr} r_n e^{j(e_n,x)} e^{k(f_n,y)}), \\
    \partial_{w_s} \psi = k (f_{1s}r_1 e^{j(e_1,x)} e^{k(f_1,y)},
    \dots, f_{ns} r_n e^{j(e_n,x)} e^{k(f_n,y)}).
  \end{gather*}
  Consequently,
  \begin{gather*}
    \langle \partial_{z_p} JH, \partial_{z_r} \psi \rangle
    = -\frac{j}{l+m}(e_{1p}e_{1r} \alpha_1 r_1^2
    + \dots + e_{np}e_{nr} \alpha_n r_n^2), \\
    \langle \partial_{z_p} JH, \partial_{w_s} \psi \rangle
    = -\frac{k}{l+m}(e_{1p}f_{1s} \alpha_1 r_1^2
    + \dots + e_{np}f_{ns} \alpha_n r_n^2),  
  \end{gather*}
  then
  \[
    (\partial_{z_p} JH, \partial_{z_r} \psi) =
    (\partial_{z_p} JH, \partial_{w_s} \psi)= 0
  \]
  for any $p,r = 1,\dots,l$ and $s = 1,\dots,m$.

  One can prove in the same way that $\nabla_{\psi_{w_q}} JH = 0$ for any $q = 1,\dots,m$.

  We obtain $\delta \omega_J(H,\cdot) = 0$.
\end{proof}

The counterpart of Lemmas \ref{lem:mean-curvature-tangent} and \ref{lem:mean-curvature-coclosed} for $\omega_K$ can be proved in the same way.

Theorem \ref{thm:Hn} is proved. Let us consider some examples.

\begin{example}
  Let us consider the torus $T^{n,m}$ in $\mathbb{H}^n$. Since $l = n$, the first condition of Theorem \ref{thm:Hn}
  \[
    \rk_{\mathbb{Z}} \langle e'_1, \dots, e'_n \rangle = l
  \]
  holds trivially, and the torus $T^{n,m}$ is H-minimal with respect to $\omega_J$.

  Let $f_1,\dots,f_m$ be an arbitrary basis $\mathbb{R}^m$, and
  \[
    f_{m+1},\dots,f_n \in \{f_1,\dots,f_m\}, \;
    r_{m+1},\dots,r_n \in \{r_1,\dots,r_m\},
  \]
  where $f_p = f_q$ if and only if $r_p = r_q$ for any $p = 1,\dots,m$; $q = m+1,\dots,n$. One can directly check that $G_1 = \text{diag}(r_1^2,\dots,r_n^2)$ and
  $\Vert e_d \Vert_{G_1^{-1}}^2 = 1/r_d^2$. Then $f'_{m+1},\dots,f'_n \in \{f'_1,\dots,f'_m\}$ and
  \[
    \rk_{\mathbb{Z}} \langle f'_1, \dots, f'_n \rangle = m.
  \]
  Consequently, $T^{n,m}$ is also H-minimal with respect to $\omega_K$ by Theorem \ref{thm:Hn}.
\end{example}

Let us notice that the mapping
\[
  \psi: \mathbb{R}^l \times \mathbb{R}^m \rightarrow \mathbb{H}^n, \;
  \psi(x,y) = (r_1 e^{j(e_1,x)} e^{k(f_1,y)}, \dots,
  r_n e^{j(e_n,x)} e^{k(f_n,y)})
\]
defines two projections $\psi(x,0)$ and $\psi(0,y)$ on the homogeneous tori $T^{l,0}$ and $T^{0,m}$ in $\mathbb{C}^n$ accordingly. Necessary and sufficient conditions for their H-minimality are given by \cite[Theorem~1]{Ovc18}.

\begin{example}
  The torus $T^{l,l} \subset \mathbb{H}^n$ defined by the mapping
  \[
    \psi(x,y) = (r_1 e^{j(e_1,x)} e^{k(e_1,y)},
    \dots, r_n e^{j(e_n,x)} e^{k(e_n,y)})
  \]
  is H-minimal with respect to $\omega_J$ and $\omega_K$ simultaneously if and only if the homogeneous torus $T^{l,0}$ is H-minimal in $\mathbb{C}^n$.
\end{example}

\begin{example}
  The torus $T^{2,2} \subset \mathbb{H}^4$ defined by the mapping
  \[
    \psi(x,y) = (2e^{jx_1}e^{ky_1}, e^{jx_2}e^{ky_2},
    \frac{1}{2} e^{j(2x_1+2x_2)}e^{ky_1}, e^{j(x_1+2x_2)}e^{ky_2})
  \]
  is not H-minimal with respect to neither $\omega_J$ nor $\omega_K$. Still, the homogeneous tori $T^{2,0}$ and $T^{0,2}$ are H-minimal in $\mathbb{C}^4$.
\end{example}

\begin{example}
  The torus $T^{3,3} \subset \mathbb{H}^4$ defined by the mapping
  \[
    \psi(x,y) = (e^{jx_1} e^{ky_1}, e^{jx_2} e^{ky_2},
    e^{jx_3} e^{ky_3}, \frac{1}{\sqrt{2}} e^{j2x_1} e^{k2y_2})
  \]
  is H-minimal with respect to $\omega_J$ and $\omega_K$ simultaneously. Still, the homogeneous tori $T^{3,0}$ and $T^{0,3}$ \emph{are not} H-minimal in $\mathbb{C}^4$.
\end{example}

\section{Proof of Theorem \ref{thm:HPn}}
\label{sec:HPn}

Let us consider the torus $T^{l,m} \subset S^{4n+3}(1)$ in the unit sphere, i.e., $\sum r_d^2 = 1$. One can easily check that conditions of $\pi_J$- and $\pi_K$-horizontality of the torus $T^{l,m}$ are equivalent to the identities $\sum r_d^2 e_d = 0$ and $\sum r_d^2 f_d = 0$ accordingly.

To prove Theorem \ref{thm:HPn} we are going to use the following statement, which follows from Chen and Morvan criterion of H-minimality (see Proposition \ref{prop:chen}).

\begin{lemma}[see \cite{Ovc18}*{Lemma~6}]
  Let $I \subset S^{2n+1}(1)$ be a horizontal lifting of an isotropic submanifold in $\mathbb{C} \mathrm{P}^n$. Let us denote by $\hat{H}$ the mean curvature field of $I$ in $S^{2n+1}(1)$. Then $\pi(I)$ is H-minimal in $\mathbb{C} \mathrm{P}^n$ if and only if $J \hat{H}$ is a tangent vector field to $I$ and $\delta \alpha_{\hat{H}} = 0$. \qed
\end{lemma}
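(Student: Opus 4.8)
The plan is to derive the stated horizontality criterion directly from the Chen--Morvan criterion (Proposition \ref{prop:chen}), applied on the sphere rather than on $\mathbb{C}\mathrm{P}^n$, and then to transport the conditions down through the Hopf fibration $\pi = \pi_J$. Throughout I write $\hat{H}$ for the mean curvature field of the horizontal lifting $I \subset S^{2n+1}(1)$ and $H$ for the mean curvature field of $\pi(I) \subset \mathbb{C}\mathrm{P}^n$. The key structural fact I would first record is that, for a horizontal submanifold, the differential $d\pi$ is an isometry from the tangent space of $I$ onto the tangent space of $\pi(I)$, and it intertwines the ambient complex structures: the restriction of $J$ on $S^{2n+1}(1)$ corresponds to the complex structure of $\mathbb{C}\mathrm{P}^n$ under $d\pi$. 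This is what will let me translate each half of the Chen--Morvan criterion on the base into an equivalent statement upstairs on the sphere.

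\textbf{Key steps.}
First I would relate the two mean curvature fields. Since $\pi$ is a Riemannian submersion and $I$ is horizontal, the second fundamental form of $\pi(I)$ in $\mathbb{C}\mathrm{P}^n$ is the $d\pi$-image of the horizontal part of the second fundamental form of $I$ in $S^{2n+1}(1)$; the vertical (Hopf) direction contributes nothing because $I$ is horizontal and the fibers are geodesics. Tracing, this gives $H = d\pi(\hat{H})$, so $\hat{H}$ is itself horizontal and projects isometrically onto $H$. Second, I would use the intertwining $d\pi \circ J = I_{\mathbb{C}\mathrm{P}^n} \circ d\pi$ to conclude that $I_{\mathbb{C}\mathrm{P}^n} H$ is tangent to $\pi(I)$ if and only if $J\hat{H}$ is tangent to $I$: the horizontal lift of $I_{\mathbb{C}\mathrm{P}^n}H$ is exactly $J\hat{H}$, and tangency is preserved under the isometry $d\pi$. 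Third, I would handle the coclosedness condition $\delta\alpha_{\hat{H}} = 0$ by the same transport: the $1$-form $i_H\omega = \omega(H,\cdot)$ on $\pi(I)$ pulls back under $\pi$ to $\alpha_{\hat{H}} = \omega_J(\hat{H},\cdot)$ on $I$, because $\omega(H, d\pi\,\xi) = (I_{\mathbb{C}\mathrm{P}^n}H, d\pi\,\xi) = (J\hat{H}, \xi) = \omega_J(\hat{H},\xi)$ using the isometry and the intertwining. Since $\pi|_I$ is a local isometry, the codifferential commutes with this pullback, so $\delta(i_H\omega) = 0$ on $\pi(I)$ if and only if $\delta\alpha_{\hat{H}} = 0$ on $I$.

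\textbf{Conclusion and main obstacle.}
Combining these three equivalences with Proposition \ref{prop:chen} applied to $N = \pi(I)$ in $(\mathbb{C}\mathrm{P}^n, I_{\mathbb{C}\mathrm{P}^n})$ yields exactly the statement: $\pi(I)$ is H-minimal if and only if $J\hat{H}$ is tangent to $I$ and $\delta\alpha_{\hat{H}} = 0$. I expect the main obstacle to be the careful bookkeeping in the second-fundamental-form comparison, specifically verifying that the vertical O'Neill contributions and the curvature of the fibration do not perturb the identity $H = d\pi(\hat{H})$; for a horizontal submanifold of a Riemannian submersion with totally geodesic fibers this reduction is standard, but it must be checked that horizontality of $I$ (not merely of individual tangent vectors) is what kills the vertical terms in the trace defining the mean curvature. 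A secondary subtlety is confirming that the isotropy of $\pi(I)$ (needed to invoke Proposition \ref{prop:chen}) is equivalent to horizontality of the lifting $I$ together with $\omega_J$-isotropy upstairs, which follows from the definition of the Fubini--Study form $\Omega_v$ as the form induced by $\omega_v$.
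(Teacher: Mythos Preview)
The paper does not actually prove this lemma: it is quoted verbatim from \cite{Ovc18}*{Lemma~6} and closed with a \qedsymbol, the only justification offered being the prefatory remark that it ``follows from Chen and Morvan criterion of H-minimality (see Proposition \ref{prop:chen}).'' Your proposal supplies precisely the derivation the paper gestures at --- pushing the Chen--Morvan conditions on $\pi(I)\subset\mathbb{C}\mathrm{P}^n$ up through the Hopf submersion to conditions on the horizontal lift $I\subset S^{2n+1}(1)$ --- and the three ingredients you isolate (the relation $H=d\pi(\hat H)$ for a horizontal submanifold of a Riemannian submersion with totally geodesic fibers, the intertwining of complex structures under $d\pi$, and the correspondence of the $1$-forms $i_H\omega$ and $\alpha_{\hat H}$ under the local isometry $\pi|_I$) are exactly what is needed. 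There is nothing to compare against here beyond noting that your outline is consistent with the paper's one-line indication and is, as far as it goes, correct.
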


Let $N \subset S^{4n+3}(1)$ be a compact $\pi_J$-horizontal $\omega_J$-isotropic submanifold in $\mathbb{H}^{n+1}$, $\hat{H}$ be the mean curvature field of $N$ in $S^{4n+3}(1)$. One can derive from Lemma that the submanifold $\pi_J(N)$ is H-minimal in $\mathbb{C} \mathrm{P}^{2n+1}$ if and only if $J \hat{H}$ is a tangent vector field to $N$ and $\delta \omega_J (\hat{H},\cdot) = 0$ (the statement for $\omega_K$ is analogous).

Now we are going to prove Theorem \ref{thm:HPn}. First of all, we need to find the mean curvature field of the torus $T^{l,m}$ in the sphere $S^{4n-1}(r)$.

\begin{lemma}
  The mean curvature field of the torus $T^{l,m}$ in $S^{4n-1}(r)$ has the following form:
  \[
    \hat{H} =
    \left ( \left ( \frac{1}{r^2} - \frac{\alpha_1}{l+m} \right )
    r_1 e^{j(e_1,x)} e^{k(f_1,y)}, \dots,
    \left  ( \frac{1}{r^2} - \frac{\alpha_n}{l+m} \right )
    r_n e^{j(e_n,x)} e^{k(f_n,y)} \right ).
  \]
  There we put $\alpha_d = \Vert e_d \Vert^2_{G_1^{-1}} + \Vert f_d
  \Vert^2_{G_2^{-1}} = (G_1^{-1}e_d,e_d) + (G_2^{-1}f_d,f_d)$.
\end{lemma}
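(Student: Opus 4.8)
The plan is to reduce the computation in the sphere $S^{4n-1}(r)$ to the already-known computation in $\mathbb{H}^n$ from Lemma \ref{lem:mean-curvature-Hn}, using the standard relation between mean curvature vectors of a submanifold sitting inside a sphere and inside the ambient flat space. Concretely, recall that for a submanifold $N \subset S^{4n-1}(r) \subset \mathbb{H}^n$, the second fundamental forms are related by $\mathrm{II}_{\mathbb{H}^n}(\xi,\eta) = \mathrm{II}_{S}(\xi,\eta) + \mathrm{II}_{S^{4n-1}(r) \subset \mathbb{H}^n}(\xi,\eta)$, and the second fundamental form of the round sphere $S^{4n-1}(r)$ in $\mathbb{H}^n$ is $-\frac{1}{r^2}(\text{position vector})$ times the metric. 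Tracing over an orthonormal basis of $T_p N$ (which has dimension $l+m$) gives
\[
  H_{\mathbb{H}^n} = \hat{H} - \frac{l+m}{r^2}\,\psi,
\]
where $\psi$ is the position vector (a unit normal to the sphere scaled by $r$). Hence $\hat{H} = H_{\mathbb{H}^n} + \frac{l+m}{r^2}\psi$.

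Next I would substitute the explicit formula for $H = H_{\mathbb{H}^n}$ from Lemma \ref{lem:mean-curvature-Hn}, namely its $d$-th component is $-\frac{\alpha_d}{l+m} r_d e^{j(e_d,x)} e^{k(f_d,y)}$, and the $d$-th component of $\psi$ is $r_d e^{j(e_d,x)} e^{k(f_d,y)}$. Adding $\frac{1}{r^2}\psi$ (note $\hat H = \frac{1}{l+m}\Delta\psi + \frac{1}{r^2}\psi$ if one prefers to run the Laplacian computation directly, but reusing the lemma is cleaner), the $d$-th component becomes
\[
  \left(\frac{1}{r^2} - \frac{\alpha_d}{l+m}\right) r_d e^{j(e_d,x)} e^{k(f_d,y)},
\]
which is exactly the claimed expression. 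It remains to justify that $\psi$ is indeed (a positive multiple of) the outward unit normal of $S^{4n-1}(r)$ in $\mathbb{H}^n$ at the point $\psi(x,y)$, which is immediate since $\langle \psi,\psi\rangle = \sum r_d^2 = r^2$ is constant, so $\psi$ is normal to the sphere and has length $r$; and that $\psi$ is orthogonal to $T^{l,m}$ within the tangent space of the sphere is automatic because $T^{l,m} \subset S^{4n-1}(r)$.

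The only genuine point to verify carefully is the sign and normalization in the mean-curvature decomposition, i.e. that the shape operator of $S^{4n-1}(r) \subset \mathbb{H}^n$ with respect to the inward normal $-\psi/r$ equals $\frac{1}{r}\,\mathrm{Id}$, so that the contribution to the mean curvature vector of $T^{l,m}$ is $+\frac{l+m}{r^2}\psi$ with the conventions used in the definition $H = \frac{1}{l+m}\Delta\psi$ of Lemma \ref{lem:mean-curvature-Hn}. I expect this bookkeeping of orientation conventions to be the main (and essentially only) obstacle; once it is pinned down, the lemma follows by a one-line substitution. An alternative self-contained route that avoids invoking the external formula is to note that the mean curvature of $T^{l,m}$ in $S^{4n-1}(r)$ is the tangential-to-the-sphere projection of $\frac{1}{l+m}\Delta\psi$, that $\frac{1}{l+m}\Delta\psi = H$ has components $-\frac{\alpha_d}{l+m}\psi_d$, and that subtracting its normal-to-the-sphere component $\langle H, \psi/r\rangle\,\psi/r = -\frac{1}{l+m}\big(\sum_d\alpha_d r_d^2/r^2\big)\psi/r$ — wait, one must instead add the intrinsic sphere correction; in practice the clean statement is $\hat H = H + \frac{l+m}{r^2}\psi$, and I would simply cite the standard relation and substitute.
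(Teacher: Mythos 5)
Your proposal is correct and follows essentially the same route as the paper: the paper writes $\hat H$ as the orthogonal projection of $H$ onto the tangent space of the sphere and computes $(H,\psi)=-1$ in orthonormal coordinates, whereas you obtain the same normal correction $+\frac{1}{r^2}\psi$ from the second fundamental form of the round sphere --- an equivalent piece of bookkeeping. One slip to fix: with the paper's normalization $H=\frac{1}{l+m}\Delta\psi$, the relation is $\hat H = H + \frac{1}{r^2}\psi$, not $\hat H = H + \frac{l+m}{r^2}\psi$ as you write twice; the factor $l+m$ belongs only to the un-normalized trace of the second fundamental form, and your actual component-wise substitution (adding $\frac{1}{r^2}\psi$) already uses the correct factor and yields the stated formula.
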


\begin{proof}
  We have the embeddings $T^{l,m} \subset S^{4n-1}(r) \subset \mathbb{H}^n$. Then
  \[
    \hat{H} = \text{P}(H),
  \]
  where $\text{P}$ is the orthogonal projection of the vector field $H$ to $S^{4n-1}(r)$ (see \cite{Law80}). Consequently,
  \[
    \hat{H} = H - \frac{(H,\psi)}{r^2} \psi.
  \]
  Let us show that $(H, \psi) = -1$. To this end, it is convinient to pass to the coordinates $(z,w)$ on $T^{l,m}$ in which
  \[
    (\partial_{z_p} \psi, \partial_{z_q}\psi) = \delta_{pq}, \;
    (\partial_{w_r} \psi, \partial_{w_s}\psi) = \delta_{rs}
  \]
  (see the proof of Lemma \ref{lem:mean-curvature-coclosed}). We have
  \begin{align*}
    (H,\psi) =
    -\frac{1}{l+m}(\alpha_1 r_1^2 + \dots + \alpha_n r_n^2) \\
    = -\frac{1}{l+m} \left ( \sum_{p=1}^l (\partial_{z_p} \psi,
    \partial_{z_p} \psi) + \sum_{q=1}^m (\partial_{w_q} \psi,
    \partial_{w_q} \psi) \right ) = -1.
  \end{align*}
  Then the statement follows from Lemma \ref{lem:mean-curvature-Hn}.
\end{proof}

\begin{corollary}
  The torus $T^{l,m}$ is minimal in $S^{4n-1}(r)$ if and only if
  \[
    \Vert e_d \Vert^2_{G_1^{-1}} + \Vert f_d \Vert^2_{G_2^{-1}} =
    \frac{l+m}{r^2}.
  \]
\end{corollary}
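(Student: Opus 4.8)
The plan is to read off the minimality condition directly from the formula for $\hat{H}$ just proved in the preceding lemma. Recall that a submanifold is minimal precisely when its mean curvature field vanishes identically, so the statement amounts to showing that the displayed expression for $\hat{H}$ is the zero vector field if and only if $\Vert e_d \Vert^2_{G_1^{-1}} + \Vert f_d \Vert^2_{G_2^{-1}} = (l+m)/r^2$ for every $d = 1, \dots, n$.

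First I would observe that, since each point $\psi(x,y)$ has coordinates $r_d e^{j(e_d,x)} e^{k(f_d,y)}$ with $r_d > 0$, the quaternion $e^{j(e_d,x)} e^{k(f_d,y)}$ is a unit quaternion and in particular never zero. Hence the $d$-th component of $\hat{H}$ vanishes at one (equivalently, every) point of the torus if and only if the scalar factor $\frac{1}{r^2} - \frac{\alpha_d}{l+m}$ is zero, where $\alpha_d = \Vert e_d \Vert^2_{G_1^{-1}} + \Vert f_d \Vert^2_{G_2^{-1}}$ as in the lemma. Setting all $n$ of these factors to zero and rearranging gives exactly $\alpha_d = (l+m)/r^2$ for $d = 1, \dots, n$, which is the claimed condition. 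Conversely, if this equation holds for all $d$, then every component of $\hat{H}$ is identically zero, so $\hat{H} \equiv 0$ and the torus is minimal in $S^{4n-1}(r)$.

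There is essentially no obstacle here: the corollary is a one-line consequence of the explicit mean curvature formula, and the only thing worth spelling out is the non-vanishing of the unit-quaternion factors, which lets one pass from "$\hat{H} = 0$" to "each scalar coefficient is $0$" componentwise. If one wanted to be slightly more careful, one could note that $\hat{H}$ is a priori only the tangential-to-$S^{4n-1}(r)$ part of $H$, but the lemma already delivers it as an explicit ambient vector field along $\psi$, so no further projection argument is needed. I would therefore present the argument in two or three sentences, citing the preceding lemma for the formula and the definition of minimality ($\hat H \equiv 0$) for the equivalence.
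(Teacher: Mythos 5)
Your proof is correct and matches the paper's intent: the corollary is stated as an immediate consequence of the preceding lemma's explicit formula for $\hat{H}$ (the paper gives no separate proof), and your observation that each component vanishes iff the scalar factor $\frac{1}{r^2}-\frac{\alpha_d}{l+m}$ vanishes, since $r_d>0$ and the quaternion factors are units, is exactly the right one-line argument.
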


\begin{proposition}\label{prop:minimality}
  Let $T^{l,m}$ be the torus defined by the mapping
  \[
    \psi: \mathbb{R}^l \times \mathbb{R}^m \rightarrow \mathbb{H}^n,
    \; \psi(x,y) = (r_1 e^{j(e_1,x)} e^{k(f_1,y)}, \dots,
    r_n e^{j(e_n,x)} e^{k(f_n,y)}),
  \]
  and let $T^{l,0}$ and $T^{0,m}$ be the homogeneous tori in $\mathbb{C}^n$ defined by projections $\psi(x,0)$ and $\psi(0,y)$ accordingly.

  If the tori $T^{l,0}$ and $T^{0,m}$ are minimal in $S^{2n-1}(r)$, then the torus $T^{l,m}$ is minimal in $S^{4n-1}(r)$.
\end{proposition}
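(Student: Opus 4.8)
The plan is to use the criterion from the corollary just stated: the torus $T^{l,m}$ is minimal in $S^{4n-1}(r)$ if and only if $\alpha_d = \Vert e_d \Vert^2_{G_1^{-1}} + \Vert f_d \Vert^2_{G_2^{-1}} = (l+m)/r^2$ for all $d = 1,\dots,n$. So the whole task reduces to computing $\Vert e_d \Vert^2_{G_1^{-1}}$ and $\Vert f_d \Vert^2_{G_2^{-1}}$ separately and relating them to the minimality conditions for $T^{l,0}$ and $T^{0,m}$ in $S^{2n-1}(r)$.

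First I would record what minimality of the homogeneous torus $T^{l,0}$ in $S^{2n-1}(r)$ says. The torus $T^{l,0}$ is given by $x \mapsto (r_1 e^{j(e_1,x)}, \dots, r_n e^{j(e_n,x)})$, and its metric in the coordinates $x$ has Gram matrix $G_1 = (\psi_{x_p},\psi_{x_q})_{pq} = \sum_d e_{dp}e_{dq} r_d^2$, the same $G_1$ as for $T^{l,m}$ (the $y$-factors are unit quaternions and contribute nothing to $(\psi_{x_p},\psi_{x_q})$ — this is exactly the block structure noted after the definition of $T^{l,m}$). Applying the corollary in the case $m = 0$ (or the cited \cite{Ovc18}), $T^{l,0}$ is minimal in $S^{2n-1}(r)$ precisely when $\Vert e_d \Vert^2_{G_1^{-1}} = l/r^2$ for every $d$. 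Symmetrically, $T^{0,m}$ is minimal in $S^{2n-1}(r)$ iff $\Vert f_d \Vert^2_{G_2^{-1}} = m/r^2$ for every $d$, with $G_2 = (\psi_{y_r},\psi_{y_s})_{rs}$ the same block as for $T^{l,m}$.

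Then the conclusion is immediate: adding the two identities gives $\alpha_d = \Vert e_d \Vert^2_{G_1^{-1}} + \Vert f_d \Vert^2_{G_2^{-1}} = l/r^2 + m/r^2 = (l+m)/r^2$ for all $d$, which by the corollary is exactly the condition for $T^{l,m}$ to be minimal in $S^{4n-1}(r)$. The key point making this work cleanly is that the metric of $T^{l,m}$ decouples into the block $G_1$ coming from the $x$-directions and the block $G_2$ coming from the $y$-directions, and these blocks coincide with the metrics of the homogeneous projections $T^{l,0}$ and $T^{0,m}$; so the quadratic quantities $\Vert e_d\Vert^2_{G_1^{-1}}$ and $\Vert f_d\Vert^2_{G_2^{-1}}$ are intrinsic to the respective projections.

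There is essentially no obstacle here — the only thing to be careful about is verifying that $G_1$ (resp. $G_2$) really is the metric Gram matrix of $T^{l,0}$ (resp. $T^{0,m}$) and not something twisted by the presence of the other factor; this follows from $e^{k(f_d,y)}$ being a unit quaternion and from the orthogonality $(\psi_{x_p},\psi_{y_q}) = 0$ already observed, so $(\psi_{x_p},\psi_{x_q})$ for $T^{l,m}$ and for $T^{l,0}$ are literally the same expression $\sum_d e_{dp}e_{dq}r_d^2$. Once that is in place, the proposition is just the additivity $\alpha_d = \Vert e_d \Vert^2_{G_1^{-1}} + \Vert f_d \Vert^2_{G_2^{-1}}$ combined with the two one-sided minimality conditions.
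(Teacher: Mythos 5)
Your proposal is correct and follows essentially the same route as the paper: invoke the corollary characterizing minimality of $T^{l,m}$ in $S^{4n-1}(r)$ via $\Vert e_d \Vert^2_{G_1^{-1}} + \Vert f_d \Vert^2_{G_2^{-1}} = (l+m)/r^2$, apply the minimality criteria $\Vert e_d \Vert^2_{G_1^{-1}} = l/r^2$ and $\Vert f_d \Vert^2_{G_2^{-1}} = m/r^2$ for the homogeneous projections, and observe that $G_1$ and $G_2$ are exactly the blocks of the metric of $T^{l,m}$. Your extra remark verifying that the blocks agree with the metrics of the projections is a point the paper states without elaboration, so it is a welcome addition rather than a divergence.
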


\begin{proof}
  By \cite[Corollary~1]{Ovc18} the torus $T^{l,0}$ is minimal in $S^{2n-1}(r)$ if and only if
  \[
    \Vert e_d \Vert^2_{G_1^{-1}} = \frac{l}{r^2},
  \]
  where $G_1$ is the matrix of the metric tensor of the torus $T^{l,0}$.

  Similarly, the torus $T^{0,m}$ is minimal in $S^{2n-1}(r)$ if and only if
  \[
    \Vert f_d \Vert^2_{G_2^{-1}} = \frac{m}{r^2},
  \]
  where $G_2$ is the matrix of the metric tensor of the torus $T^{0,m}$.
 
  At last, notice that $G_1$ and $G_2$ are precisely the blocks of the matrix of the metric tensor of $T^{l,m}$.
\end{proof}

\begin{example}
  The torus $T^{3,3}$ in $\mathbb{H}^4$ defined by the mapping
  \[
    \psi(x,y) = \left (e^{jx_1} e^{ky_1},
    \sqrt{\frac{3}{2}}e^{jx_2} e^{ky_2},
    \sqrt{\frac{3}{2}} e^{jx_3} e^{ky_3},
    e^{-j(x_1+x_2+x_3)} e^{-ky_1} \right )
  \]
  is minimal in $S^{17}(\sqrt{5}) \subset \mathbb{H}^4$. Still, the homogeneous tori $T^{3,0}$ and $T^{0,3}$ \emph{are not} minimal in $S^7(\sqrt{5}) \subset \mathbb{C}^4$ (see \cite[Corollary~1]{Ovc18}).
\end{example}

\begin{lemma}\label{lem:mean-curvature-HPn-tangent}
  Let $\hat{H}$ be the mean curvature field of the torus $T^{l,m}$ in $S^{4n-1}(1)$. The vector field $J \hat{H}$ is tangent to $T^{l,m}$ if and only if
  \[
    \rk_{\mathbb{Z}} \langle e''_1, \dots, e''_n \rangle = l.
  \]
\end{lemma}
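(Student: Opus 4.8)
\textbf{Proof proposal for Lemma \ref{lem:mean-curvature-HPn-tangent}.}

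The plan is to mirror the argument of Lemma \ref{lem:mean-curvature-tangent}, the only difference being that the coefficient $\alpha_d/(l+m)$ is replaced by $1 - \alpha_d/(l+m)$ coming from the explicit formula for $\hat{H}$ in $S^{4n-1}(1)$ (take $r = 1$ in the lemma computing $\hat H$). First I would write down that $J\hat H$ is tangent to $T^{l,m}$ exactly when there exist real functions $\lambda_1(x,y),\dots,\lambda_l(x,y)$ and $\eta_1(x,y),\dots,\eta_m(x,y)$ with
\[
  \hat H = j\bigl(\lambda_1 \partial_{x_1}\psi + \dots + \lambda_l \partial_{x_l}\psi + \eta_1 \partial_{y_1}\psi + \dots + \eta_m \partial_{y_m}\psi\bigr),
\]
and, comparing components (using $\partial_{x_p}\psi = j(\dots, e_{dp} r_d e^{j(e_d,x)}e^{k(f_d,y)},\dots)$ and the analogous expression for $\partial_{y_q}\psi$ with a $k$ in front), this is equivalent to the system
\[
  1 - \frac{\alpha_d}{l+m} = (\lambda, e_d) - i(\eta, f_d), \quad d = 1,\dots,n,
\]
with $\lambda = (\lambda_1,\dots,\lambda_l)$, $\eta = (\eta_1,\dots,\eta_m)$. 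As in Lemma \ref{lem:mean-curvature-tangent}, the left-hand side is real, so $\eta$ must vanish, and we are left with the overdetermined real linear system $(\lambda, e_d) = 1 - \alpha_d/(l+m)$, $d = 1,\dots,n$, whose unknown $\lambda$ automatically does not depend on $(x,y)$.

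Next I would solve this system. Writing $\alpha_d = \Vert e_d\Vert^2_{G_1^{-1}} + \Vert f_d\Vert^2_{G_2^{-1}}$, the right-hand side equals $\tfrac{1}{l+m}\bigl((l+m) - \alpha_d\bigr)$, so up to the harmless overall factor $\tfrac{1}{l+m}$ the system reads $(\lambda, e_d) = (l+m) - \alpha_d = -\bigl(\alpha_d - (l+m)\bigr)$. The point is precisely that the last coordinate of $e''_d = (e_d;\,\alpha_d - (l+m))$ is (up to sign) this right-hand side. Hence the augmented system has a solution $\lambda$ if and only if every $\mathbb{R}$-linear relation among the $e_d$ is also satisfied by the right-hand sides, i.e.
\[
  \rk_{\mathbb{R}} \langle e''_1,\dots,e''_n\rangle = \rk_{\mathbb{R}} \langle e_1,\dots,e_n\rangle = l.
\]
Finally, since the vectors $e_1,\dots,e_n$ span a lattice of rank $l$ in $\mathbb{R}^l$ we have $\rk_{\mathbb{R}}\langle e_1,\dots,e_n\rangle = \rk_{\mathbb{Z}}\langle e_1,\dots,e_n\rangle = l$, and because $e''_d$ differs from $e_d$ only by appending one real coordinate that is a fixed real-linear function of $e_d$ on the lattice, the rank over $\mathbb{R}$ and over $\mathbb{Z}$ of $\langle e''_1,\dots,e''_n\rangle$ coincide; this converts the condition into $\rk_{\mathbb{Z}}\langle e''_1,\dots,e''_n\rangle = l$, as claimed. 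The statement for $J$ acting on the $\pi_K$ side is identical with $e''_d$ replaced by $f''_d$.

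I do not expect a genuine obstacle here: the computation of $\partial_{x_p}\psi$, $\partial_{y_q}\psi$ and the identification of the right-hand side with the last coordinate of $e''_d$ are routine, and the solvability-of-overdetermined-linear-system step together with the $\mathbb{R}$-vs-$\mathbb{Z}$ rank identity is exactly the bookkeeping already carried out in Lemma \ref{lem:mean-curvature-tangent}. The only place demanding a little care is keeping track of the constant shift by $(l+m)$ — that is, making sure the formula for $\hat H$ in $S^{4n-1}(1)$ (not $S^{4n-1}(r)$ for general $r$) produces the coefficient $1 - \alpha_d/(l+m)$ and hence the particular vector $e''_d$ rather than $e'_d$.
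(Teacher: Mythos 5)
Your proposal is correct and is exactly the argument the paper intends: the paper's own proof of this lemma is just the remark that it is ``similar to the proof of Lemma \ref{lem:mean-curvature-tangent}'', and you have carried out precisely that adaptation, replacing $\alpha_d/(l+m)$ by $1-\alpha_d/(l+m)$ and hence $e'_d$ by $e''_d$. The overall sign of the right-hand side of your linear system differs from a strict bookkeeping of the paper's conventions, but this is harmless since solvability in $\lambda$ is unaffected.
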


\begin{proof}
  The proof is similar to the proof of Lemma \ref{lem:mean-curvature-tangent}.
\end{proof}

\begin{lemma}\label{lem:mean-curvature-HPn-coclosed}
  Suppose that the vector field $J \hat{H}$ is tangent to the torus $T^{l,m}$. Then $\delta \omega_J(\hat{H}, \cdot) = 0$.
\end{lemma}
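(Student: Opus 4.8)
The plan is to imitate the proof of Lemma~\ref{lem:mean-curvature-coclosed} almost verbatim, since $\hat H$ differs from $H$ only by a scalar multiple of the position vector $\psi$. First I would pass to the orthonormal coordinates $(z,w)$ on $T^{l,m}$ constructed in the proof of Lemma~\ref{lem:mean-curvature-coclosed} via Gram--Schmidt, in which $(\partial_{z_p}\psi,\partial_{z_q}\psi)=\delta_{pq}$ and $(\partial_{w_r}\psi,\partial_{w_s}\psi)=\delta_{rs}$. Then, as before, $\delta\omega_J(\hat H,\cdot)=\operatorname{div}J\hat H=\sum_p(\nabla_{\psi_{z_p}}J\hat H,\psi_{z_p})+\sum_q(\nabla_{\psi_{w_q}}J\hat H,\psi_{w_q})$, and it suffices to show each covariant derivative $\nabla_{\psi_{z_p}}J\hat H$ and $\nabla_{\psi_{w_q}}J\hat H$ vanishes — equivalently, that $\partial_{z_p}J\hat H$ and $\partial_{w_q}J\hat H$ are orthogonal to all $\partial_{z_r}\psi$ and $\partial_{w_s}\psi$.

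Now $J\hat H = J H - \tfrac{1}{r^2}(H,\psi)\,J\psi$, and with $r=1$ and $(H,\psi)=-1$ this is $J\hat H = JH + J\psi$. The term $JH$ was already handled in Lemma~\ref{lem:mean-curvature-coclosed}: its $z_p$- and $w_q$-derivatives are orthogonal (in the real metric) to every coordinate tangent field. So the only new computation is for the extra term $J\psi=j\psi$. I would compute $\partial_{z_p}(j\psi)=j\,\partial_{z_p}\psi$ (since $j$ acts as a constant left multiplication and the $z_p$-derivative hits each quaternionic exponential), and then pair it against $\partial_{z_r}\psi$ and $\partial_{w_s}\psi$ using the Hermitian product $\langle\cdot,\cdot\rangle$, exactly as in the earlier lemma. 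Concretely, $\langle j\,\partial_{z_p}\psi,\partial_{z_r}\psi\rangle$ and $\langle j\,\partial_{z_p}\psi,\partial_{w_s}\psi\rangle$ will come out to be $j$ or $k$ times a real combination of $e_{\cdot p}e_{\cdot r}r_\cdot^2$ or $e_{\cdot p}f_{\cdot s}r_\cdot^2$ — hence purely imaginary quaternions with no real part — so that the real inner products $(\partial_{z_p}(j\psi),\partial_{z_r}\psi)$ and $(\partial_{z_p}(j\psi),\partial_{w_s}\psi)$ all vanish. The same applies to the $w_q$-derivatives by the symmetric argument.

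Combining the two terms, $\nabla_{\psi_{z_p}}J\hat H=\nabla_{\psi_{w_q}}J\hat H=0$ for all $p,q$, whence $\operatorname{div}J\hat H=0$ and $\delta\omega_J(\hat H,\cdot)=0$. The counterpart for $\omega_K$ follows identically, replacing $j$ by $k$ throughout.

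I do not anticipate a genuine obstacle here: the statement is essentially a corollary of Lemma~\ref{lem:mean-curvature-coclosed}, the only subtlety being to verify that the correction term $J\psi$ contributes nothing — which reduces to the observation that $\langle\xi,\eta\rangle\in\mathbb{C}$ for tangent vectors (Proposition~\ref{prop:isotropic}) still leaves the relevant pairings with a $j$- or $k$-factor in front, killing the real part. If there is any mild bookkeeping to watch, it is making sure the projection $\operatorname{Pr}$ onto $T^{l,m}$ is taken inside $S^{4n-1}(1)$ rather than inside $\mathbb{H}^n$; but since $J\psi$ is itself tangent to the sphere (being orthogonal to the radial direction $\psi$), the two projections agree on the terms we care about, and the argument goes through unchanged.
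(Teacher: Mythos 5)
Your proposal is correct and follows essentially the same route as the paper: orthonormal coordinates $(z,w)$, the divergence formula for $\delta\omega_J(\hat H,\cdot)$, and the observation that the Hermitian pairings of $\partial_{z_p}J\hat H$ (resp.\ $\partial_{w_q}J\hat H$) with the coordinate tangent fields are $j$- or $k$-multiples of reals, hence have vanishing real part. The only (harmless) difference is organizational: you split $J\hat H=JH+J\psi$ by linearity and reuse Lemma~\ref{lem:mean-curvature-coclosed} for the first summand, whereas the paper redoes the single computation directly with the coefficients $\beta_d=1/r^2-\alpha_d/(l+m)$.
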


\begin{proof}
  Let us notice that there is linear change of coordinages $z = Ax$ and $w = By$ on $\mathbb{R}^l$ and $\mathbb{R}^m$ accordingly such that
  \[
    (\partial_{z_p} \psi, \partial_{z_q} \psi) = \delta_{pq}, \;
    (\partial_{w_r} \psi, \partial_{w_s} \psi) = \delta_{rs},
  \]
  which follows from the Gram~--- Schmidt process. For simplifying the proof, it is convinient to pass to the coordinates $(z,w)$.

  Recall that $\omega_J(\hat{H}, \cdot) = (J \hat{H}, \cdot)$. We have (see, for example, \cite{Bes08})
  \[
    \delta \omega_J(\hat{H}, \cdot) = \diver J \hat{H}
    = \sum_{p=1}^l (\nabla_{\psi_{z_p}} J \hat{H}, \psi_{z_p})
    + \sum_{q=1}^m (\nabla_{\psi_{w_q}} J \hat{H}, \psi_{w_q}),
  \]
  where $\nabla$ is a connection on the torus $T^{l,m}$ compatible with the induced metric. Let us show that $\nabla_{\psi_{z_p}} J \hat{H} = \nabla_{\psi_{w_q}} J \hat{H} = 0$ for any $p = 1,\dots,l$ and $q = 1,\dots,m$.

  Actually,
  \begin{align*}
    \nabla_{\psi_{z_p}} J \hat{H}
    = \pr(\partial_{z_p} J \hat{H}) \\
    = \pr(\partial_{z_p} j
    (\beta_1 r_1 e^{j(e_1,z)} e^{k(f_1,w)}, \dots,
    \beta_n r_n e^{j(e_n,z)} e^{k(f_n,w)})) \\
    = -\pr (e_{1p} \beta_1 r_1 e^{j(e_1,z)} e^{k(f_1,w)},
    \dots, e_{np} \beta_n r_n e^{j(e_n,z)} e^{k(f_n,w)}),
  \end{align*}
  where $\pr$ is the orthogonal projection of the vector field to the torus. We also put $\beta_d = 1/r^2 - \alpha_d / (l+m)$. Furthermore,
  \begin{gather*}
    \partial_{z_r} \psi = j (e_{1r}r_1 e^{j(e_1,x)} e^{k(f_1,y)},
    \dots, e_{nr} r_n e^{j(e_n,x)} e^{k(f_n,y)}), \\
    \partial_{w_s} \psi = k (f_{1s}r_1 e^{j(e_1,x)} e^{k(f_1,y)},
    \dots, f_{ns} r_n e^{j(e_n,x)} e^{k(f_n,y)}).
  \end{gather*}
  Consequently,
  \begin{gather*}
    \langle \partial_{z_p} J \hat{H}, \partial_{z_r} \psi \rangle
    = j(e_{1p}e_{1r} \beta_1 r_1^2 + \dots
    + e_{np}e_{nr} \beta_n r_n^2), \\
    \langle \partial_{z_p} J \hat{H}, \partial_{w_s} \psi \rangle
    = k(e_{1p}f_{1s} \beta_1 r_1^2 + \dots
    + e_{np}f_{ns} \beta_n r_n^2),
  \end{gather*}
  then
  \[
    (\partial_{z_p} J \hat{H}, \partial_{z_r} \psi) =
    (\partial_{z_p} J \hat{H}, \partial_{w_s} \psi) = 0
  \]
  for any $p,r = 1,\dots,l$ and $s = 1,\dots,m$.

  One can prove in the same way that $\nabla_{\psi_{w_q}} J \hat{H} = 0$ for any $q = 1,\dots,m$.

  We obtain $\delta \omega_J(\hat{H}, \cdot) = 0$.
\end{proof}

The counterpart of Lemmas \ref{lem:mean-curvature-HPn-tangent} and \ref{lem:mean-curvature-HPn-coclosed} for $\omega_K$ can be proved in the same way.

Theorem \ref{thm:HPn} is proved. Let us consider some examples.

Recall that the mapping
\[
  \psi: \mathbb{R}^l \times \mathbb{R}^m \rightarrow \mathbb{H}^n, \;
  \psi(x,y) = (r_1 e^{j(e_1,x)} e^{k(f_1,y)}, \dots,
  r_n e^{j(e_n,x)} e^{k(f_n,y)})
\]
defines two projections $\psi(x,0)$ and $\psi(0,y)$ on the homogeneous tori $T^{l,0}$ and $T^{0,m}$ in $\mathbb{C}^n$ accordingly. One can easily check that conditions of $\pi_J$- and $\pi_K$-horizontality of the torus $T^{l,m}$
\[
  \sum r_d^2 e_d = 0, \quad \sum r_d^2 f_d = 0
\]
are precisely the conditions of horizontality of the tori $T^{l,0}$ and $T^{0,m}$ accordingly.

Let us denote by
\[
  \pi: S^{2n+1}(1) \rightarrow \mathbb{C} \mathrm{P}^n
\]
the Hopf fibration. Necessary and sufficient conditions for H-minimality of the tori $\pi(T^{l,0})$ and $\pi(T^{0,m})$ in $\mathbb{C} \mathrm{P}^{n-1}$ are given by \cite[Theorem~2]{Ovc18}.

\begin{example}
  Let $T^{l,l} \subset S^{4n+3}(1)$ be the $\pi_J$- and $\pi_K$-horizontal torus in $\mathbb{H}^{n+1}$ defined by the mapping
  \[
    \psi(x,y) = (r_1 e^{j(e_1,x)} e^{k(e_1,y)}, \dots,
    r_{n+1} e^{j(e_{n+1},x)} e^{k(e_{n+1},y)}).
  \]
  The tori $\pi_J(T^{l,l})$ and $\pi_K(T^{l,l})$ are H-minimal in $\mathbb{C} \mathrm{P}^{2n+1}$ if and only if $\pi(T^{l,0})$ is H-minimal in $\mathbb{C} \mathrm{P}^n$.
\end{example}

\begin{example}
  Let $T^{2,2}$ be the torus in $\mathbb{H}^4$ defined by the mapping
  \[
    \psi(x,y) = \frac{1}{2} (e^{jx_1} e^{ky_1}, e^{jx_2} e^{ky_2},
    e^{-jx_1} e^{-k(2y_1+y_2)}, e^{-jx_2} e^{ky_1}).
  \]
  The tori $\pi_J(T^{2,2})$ and $\pi_K(T^{2,2})$ are H-minimal, but not minimal in $\mathbb{C} \mathrm{P}^7$. Still, the torus $\pi(T^{2,0})$ is minimal in $\mathbb{C} \mathrm{P}^3$, and the torus $\pi(T^{0,2})$ is H-minimal, but not minimal in $\mathbb{C} \mathrm{P}^3$.
\end{example}

\begin{example}
  Let $T^{2,3}$ be the torus in $\mathbb{H}^4$ defined by the mapping
  \[
    \psi(x,y) = \frac{1}{2}(e^{jx_1} e^{ky_1}, e^{jx_2} e^{ky_2},
    e^{-jx_1} e^{ky_3}, e^{-jx_2} e^{-k(y_1+y_2+y_3)}).
  \]
  The tori $\pi(T^{2,0})$ and $\pi(T^{0,3})$ are minimal in $\mathbb{C} \mathrm{P}^3$ by \cite[Corollary~1]{Ovc18}. Then the minimality of $\pi_J(T^{2,3})$ and $\pi_K(T^{2,3})$ in $\mathbb{C} \mathrm{P}^7$ follows from Proposition \ref{prop:minimality}.
\end{example}

\begin{bibdiv}
  \begin{biblist}
    \bib{Bes08}{book}{
      author={Besse, A.},      
      title={Einstein Manifolds},
      publisher={Springer},
      address={Berlin},
      date={2008}}

    \bib{CM94}{article}{
      author={Chen, B.},
      author={Morvan, J.-M.},
      title={Deformations of isotropic submanifolds in K\"ahler manifolds},
      journal={Journal of Geometry and Physics},
      number={1},
      volume={13},
      date={1994},  
      pages={79--104}}
    
    \bib{CU98}{article}{
      author={Castro, I.},
      author={Urbano, F.},
      title={Examples of unstable Hamiltonian-minimal Lagrangian tori in $\mathbb{C}^2$},
      journal={Compositio Mathematica},
      number={1},
      volume={111},
      date={1998},      
      pages={1--14}}

    \bib{Law80}{book}{
      author={Lawson, H.~B.},      
      title={Lectures on minimal submanifolds},
      publisher={Publish or Perish},
      address={Berkeley},
      date={1980}}

    \bib{Ma05}{article}{
      author={Ma, H.},
      title={Hamiltonian Stationary Lagrangian Surfaces in $\mathbb{C} \mathrm{P}^2$},
      journal={Annals of Global Analysis and Geometry},
      number={1},
      volume={27},
      date={2005},
      pages={1--16}}
    
    \bib{Mir03}{article}{
      author={Mironov, A.~E.},
      title={On Hamiltonian-minimal Lagrangian tori in $\mathbb{C} \mathrm{P}^2$},
      journal={Siberian Mathematical Journal},
      number={6},
      volume={44},
      date={2003},
      pages={1039--1042}}

    \bib{Mir04}{article}{
      author={Mironov, A.~E.},
      title={New examples of Hamilton-minimal and minimal Lagrangian
    manifolds in $\mathbb{C}^n$ and $\mathbb{C} \mathrm{P}^n$},
      journal={Sbornik: Mathematics},
      number={1},
      volume={195},
      date={2004},
      pages={85--96}}

    \bib{MP13}{article}{
      author={Mironov, A.~E.},
      author={Panov, T.~E.},      
      title={Intersections of quadrics, moment-angle manifolds, and
    Hamiltonian-minimal Lagrangian embeddings},
      journal={Functional Analysis and Its Applications},
      number={1},
      volume={47},
      date={2013},
      pages={38--49}}
    
    \bib{Oh93}{article}{
      author={Oh, Y.-G.},
      title={Volume minimization of Lagrangian submanifolds under Hamiltonian deformations},
      journal={Mathematische Zeitschrift},
      number={1},
      volume={212},
      date={1993},      
      pages={175--192}}

    \bib{Ovc18}{article}{
      author={Ovcharenko, M.~A.},      
      title={On Hamiltonian-minimal isotropic homogeneous tori in
    $\mathbb{C}^n$ and $\mathbb{C} \mathrm{P}^n$},
      journal={Siberian Mathematical Journal},
      number={5},
      volume={59},
      date={2018},
      pages={931--937}}    
    
    \bib{HR00}{article}{
      author={Helein, F.},
      author={Romon, P.},
      title={Weierstrass representation of Lagrangian surfaces in four-dimensional space using spinors and quaternions},
      journal={Commentarii Mathematici Helvetici},
      number={4},
      volume={75},
      date={2000},  
      pages={668--680}}

    \bib{HR02}{article}{
      author={Helein, F.},
      author={Romon, P.},
      title={Hamiltonian stationary Lagrangian surfaces in $\mathbb{C}^2$},
      journal={Communications in Analysis and Geometry},
      number={1},
      volume={10},
      date={2002},  
      pages={79--126}}
    
    \bib{Sha91}{article}{
      author={Sharipov, R.~A.},      
      title={Minimal tori in the five-dimensional sphere in $\mathbb{C}^3$},
      journal={Theoretical and Mathematical Physics},
      number={1},
      volume={87},
      date={1991},
      pages={363--369}}

    \bib{Yer18}{article}{
      author={Yermentay, M.~S.},      
      title={On minimal isotropic tori in $\mathbb{C} \mathrm{P}^3$},
      journal={Siberian Mathematical Journal},
      number={3},
      volume={59},
      date={2018},
      pages={415--419}}

    \bib{Yer19}{article}{
      author={Yermentay, M.~S.},      
      title={On a family of minimal isotropic tori and Klein bottles in $\mathbb{C} \mathrm{P}^3$},
      journal={Siberian Electronic Mathematical Reports},
      number={},
      volume={16},
      date={2019},
      pages={955--958}}
  \end{biblist}
\end{bibdiv}

\end{document}